
\documentclass{amsart}
\usepackage{amsmath,amssymb,epsfig,amscd,amsthm}

\usepackage{verbatim}



\numberwithin{equation}{section}

\newtheorem{lemma}[equation]{Lemma}
\newtheorem{thm}[equation]{Theorem}

\newtheorem{cor}[equation]{Corollary}
\newtheorem{prop}[equation]{Proposition}

\newtheorem{defi}[equation]{Definition}

\theoremstyle{remark}

\newtheorem{remark}[equation]{Remark}


\renewcommand{\bar}[1]{#1\llap{$\overline{\phantom{\rm#1}}$}}

\newcommand{\lra}{\longrightarrow}

\DeclareMathOperator{\Prep}{{Prep}}
\newcommand{\hhat}{\widehat{h}}

\newcommand{\N}{{\mathbb N}}
\newcommand{\Z}{{\mathbb Z}}
\newcommand{\Q}{{\mathbb Q}}

\newcommand{\bK}{\overline{K}}

\newcommand{\Kbar}{{\bar{K}}}

\newcommand{\bC}{{\mathbb C}}

\newcommand{\cU}{\mathcal{U}}

\newcommand{\cZ}{\mathcal{Z}}

\newcommand{\cE}{\mathcal{E}}


\newcommand{\fp}{\mathfrak p}
\newcommand{\fq}{\mathfrak q}

\newcommand{\fo}{\mathfrak o}

\newcommand{\bP}{{\mathbb P}}

\begin{document}



\title{Portraits of preperiodic points for rational maps}

\author{D.~Ghioca}
\address{
Dragos Ghioca\\
Department of Mathematics\\
University of British Columbia\\
Vancouver, BC V6T 1Z2\\
Canada
}
\email{dghioca@math.ubc.ca}

\author{K.~Nguyen}
\address{
Khoa Nguyen
Department of Mathematics\\
University of British Columbia\\
Vancouver, BC V6T 1Z2\\
Canada
}
\email{dknguyen@math.ubc.ca}

\author{T.~J.~Tucker}
\address{
Thomas Tucker\\
Department of Mathematics\\ 
University of Rochester\\
Rochester, NY 14627\\
USA
}
\email{thomas.tucker@rochester.edu}

\keywords {heights; $abc$-conjecture; arithmetic dynamics}
\subjclass[2010]{Primary 11G50; Secondary 14G99}
\thanks{The first author was partially supported by an NSERC Discovery Grant. The second author thanks the Pacific Institute of Mathematical Sciences for its generous support. The third
  author was partially supported by  an NSF grant.}


 \begin{abstract}
Let $K$ be a function field over an algebraically closed field $k$ of characteristic $0$, let $\varphi\in K(z)$ be a rational function of degree at least equal to $2$ for which there is no point at which $\varphi$ is totally ramified, and let $\alpha\in K$. We show that for all but finitely many pairs $(m,n)\in \Z_{\ge 0}\times \N$ there exists a place $\fp$ of $K$ such that the point $\alpha$ has preperiod $m$ and minimum period $n$ under the action of $\varphi$.  This answers a conjecture made by Ingram-Silverman \cite{IS} and Faber-Granville \cite{FG}. We prove a similar result, under suitable modification, also when $\varphi$ has points where it is  totally ramified. We give several applications of our result, such as  showing that for any tuple $(c_1,\dots , c_{d-1})\in k^{n-1}$ and for almost all pairs $(m_i,n_i)\in \Z_{\ge 0}\times \N$ for $i=1,\dots, d-1$, there exists a polynomial $f\in k[z]$ of degree $d$ in normal form such that for each $i=1,\dots, d-1$, the point $c_i$ has preperiod $m_i$ and minimum period $n_i$ under the action of $f$. 
\end{abstract}


\maketitle

\section{Introduction}
\label{intro}

Throughout this paper, let $K$ be a finitely generated field of transcendence
degree $1$ over an algebraically closed field $k$ of characteristic $0$. By a place
$\fp$
of $K$, we mean an equivalence class of valuations on $K$ that are trivial on $k$; the set of all such places is denoted by $\Omega_K$.  
Each such place $\fp$ gives rise to a valuation ring $\fo_\fp$
and a maximal ideal denoted (by an abuse of notation) by $\fp$. The residue field
is (canonically isomorphic to) $k$. We let
$v_\fp$ and $|\cdot|_\fp$ respectively denote the corresponding additive and multiplicative
valuations normalized by $v_\fp(K)=\Z$ and $|\alpha|_\fp=e^{-v_\fp(\alpha)}$
for every $\alpha\in K$. For a rational map $\varphi\in K(x)$, 
 for all but finitely many places $\fp$ of $K$ we can define
the reduction of $\varphi$ modulo $\fp$, see Section \ref{good reduction for rational maps} or \cite[Chapter 2]{Silverman-book}. We start with a definition which is central for our paper:

\begin{defi}
\label{portrait definition}
Let $F$ be any field. For a rational map $\varphi\in F(z)$ and for $\alpha\in F$, we say that $(m,n)\in \Z_{\ge 0}\times \N$ is the preperiodicity portrait, or simply portrait, of $\alpha$ (with respect to $\varphi$) if  $\varphi^{m}(\alpha)$ is periodic of minimum period $n$ for $\varphi$, while $m$ is the smallest nonnegative integer such that $\varphi^{m}(\alpha)$ is periodic (as always in dynamics, we denote by $\varphi^k=\varphi\circ \cdots \circ \varphi$ composed with itself $k$ times). We call $m$ the {\it preperiod} of
$\alpha$ and call $n$ the {\it minimum period} of $\alpha$. 
\end{defi}

Let $\varphi\in K(z)$, $\alpha\in \bP^1(K)$, and $\fp$ a place of $K$
such that the reduction of $\varphi$ modulo $\fp$ is
well-defined. Assume that $\alpha$ has portrait $(m,n)$ under the
induced reduction self-map on $\bP^1(k)$. We call $(m,n)$ the
preperiodicity portrait of $\alpha$ (under the action of $\varphi$)
\textit{modulo $\fp$}. The existence of places $\fp$ of $K$ for which
$\alpha$ has portrait $(m,n)$ under the action of $\varphi$ modulo
$\fp$ has been studied for more than 100 years (see the papers of Bang
\cite{Bang} and Zsigmondy \cite{Zsigmondy}) and also more recently
(see \cite{Schinzel, IS, FG, GNT, Krieger}). Some of the results are
easier to obtain when the dynamical system induces a group action (see
\cite{Bang, Zsigmondy}, and also more recent results for Drinfeld
modules \cite{equi, L-C}). However, when there is no group action, the
problem is much harder. Certain conjectures on this problem have been
made by Ingram-Silverman \cite[pp.~300--301]{IS} and modified by
Faber-Granville \cite[pp.~190]{FG}, yet very few general results are
known.  Faber and Granville
\cite{FG} have proved that if $\varphi\in\Q(z)$ and $\alpha\in\Q$, then for
all but finitely many $m\in\Z_{\ge 0}$ there exists a co-finite set
$S_m\subseteq \N$ (i.e., $\N\setminus S_m$ is finite) such that for
each $n\in S_m$, there exists a prime $p$ such that the preperiod of
$\alpha$ modulo $p$ is $m$, while its minimum period \emph{divides}
$n$. On the other hand, Ingram-Silverman \cite{IS} and Faber-Granville
\cite{FG} conjecture that one can obtain a similar conclusion this
time for minimum period \emph{equal} to $n$. In this paper, we are
able to resolve their conjecture over function fields. First we define
the set of exceptions from the conjecture of Ingram-Silverman
\cite{IS} and Faber-Granville \cite{FG}.

\begin{defi}
For any rational function $\varphi\in K(z)$, let $X(\varphi)$ be the set of $n$
such that $\varphi$ is totally ramified at every point of minimum period $n$. 
 For a rational function
$\varphi$ and a point $\alpha$, we let $Y(\varphi, \alpha)$ be the
set of positive integers $m$ such that $\varphi$ is
totally ramified over
$\varphi^m(\alpha)$ (i.e. $\varphi$ is totally ramified at 
$\varphi^{m-1}(\alpha)$). 
\end{defi}

Now we can state our main result; for more details on good reduction of a rational map $\varphi$ and for the canonical height $\hhat_\varphi$ associated to $\varphi$, see Section~\ref{subsect:canonical height}.
\begin{thm}\label{MN}
Let $K$ be a function field.   Let $\varphi \in K(z)$
have degree $d > 1$.  Let $\tau,s > 0$ and let $S$ be a set of
places of $K$ containing all the
places of bad reduction for $\varphi$ such that $\#S\leq s$.  Then there is a finite set
$\cZ(\tau, s)\subset \Z_{\ge 0}\times \N$ depending only on $\varphi$, $K$, $\tau$, and $s$ with the following property:  for any $\alpha \in \bP^1(K)$ with
$\hhat_\varphi(\alpha) > \tau$ and any $(m,n) \in \Z_{\ge 0}\times \N$ such that $(m,n) \not
\in \cZ(\tau,s)$, $m \notin Y(\varphi, \alpha)$ and $n \notin
X(\varphi)$, there is a nonarchimedean prime $\fp$ of good reduction
for $\varphi$ such that $\alpha$ has portrait $(m,n)$ under the action of $\varphi$ modulo $\fp$.
Moreover, the set $\cZ(\tau, s)$ is
effectively computable.  
\end{thm}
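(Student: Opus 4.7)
The plan is to translate the portrait condition into a divisor-theoretic statement on the set $\Omega_K$ of places of $K$, attached to the iterates $P_i := \varphi^i(\alpha)$, and then to use the function-field product formula together with the canonical height to exhibit, outside an effectively computable finite set $\cZ(\tau,s)$, a prime $\fp$ of good reduction with exact portrait $(m,n)$.

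\textbf{Setup and combinatorial reduction.} For each $0 \le i < j$, I would introduce the effective divisor $D_{i,j} := \sum_{\fp \notin S} d_\fp(P_i, P_j)\,\fp$, where $d_\fp(P_i, P_j)$ denotes the $\fp$-adic order of coincidence of the reductions of $P_i, P_j$ in $\bP^1(k)$. A useful combinatorial observation: given $\fp \in \supp D_{m,m+n}$, the mod-$\fp$ period of $\alpha$ divides $n$ and the preperiod is at most $m$; moreover, the preperiod is \emph{exactly} $m$ iff $\fp \notin \supp D_{m-1,m-1+n}$, because any smaller preperiod would make $\overline{P}_{m-1}$ periodic of period dividing $n$, hence identified with $\overline{P}_{m-1+n}$. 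Consequently it suffices to prove the divisor-degree inequality
$$\deg D_{m,m+n} \;>\; \sum_{\substack{n' \mid n \\ n' < n}} \deg D_{m, m+n'} \;+\; \deg D_{m-1, m-1+n}$$
for all $(m,n)$ outside the desired finite set (dropping the last term when $m = 0$).

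\textbf{Main height estimate.} The product formula on $K$ together with the good-reduction hypothesis at places outside $S$ gives the identity $\deg D_{i,j} = \hhat_\varphi(P_i) + \hhat_\varphi(P_j) + O_{\varphi,K,s}(1)$, while $\hhat_\varphi(P_i) = d^i\hhat_\varphi(\alpha) > d^i\tau$. Subtracting the bad contributions yields
$$\deg D_{m,m+n} - \deg D_{m-1,m-1+n} - \sum_{\substack{n' \mid n \\ n' < n}} \deg D_{m,m+n'} \;\ge\; \Bigl(1 - \tfrac{1}{d}\Bigr) d^{m+n}\tau - O_{\varphi,K,s}\bigl(n\, d^{m+n/p}\bigr),$$
with $p$ the least prime factor of $n$. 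Since $n/d^{n/2} \to 0$, this lower bound is strictly positive for all $(m,n)$ outside a finite, effectively computable set $\cZ(\tau,s)$.

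\textbf{Necessity of $X, Y$, and the main obstacle.} The exclusions $n \notin X(\varphi)$ and $m \notin Y(\varphi,\alpha)$ are genuine obstructions, not proof artifacts: if every $n$-periodic point of $\varphi$ is totally ramified, then mod any good prime the unique $\overline{\varphi}$-preimage of each cycle point is itself in the cycle, collapsing preperiod to $0$; similarly, $\varphi$ totally ramified over $\varphi^m(\alpha)$ forces $\overline{P}_{m-1} = \overline{P}_{m+n-1}$ at every good prime. The principal technical obstacle is to make the error $O_{\varphi,K,s}(1)$ in the height identity genuinely uniform in $(i,j)$: a priori the contributions from places in $S$ to $\hhat_\varphi(P_i)$ can grow with $i$, so the argument will need a Mason--Stothers-type bound on the divisor of $P_i - P_j$ on the smooth projective model of $K$, together with a Riemann--Hurwitz bound on the ramification of the iterates $\varphi^k$, to extract a constant depending only on $\varphi, K, \tau, s$ and not on $(m,n)$.
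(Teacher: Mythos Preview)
Your combinatorial reduction has a genuine gap. You claim it suffices to prove the divisor-degree inequality
\[ \deg D_{m,m+n} \;>\; \sum_{n' \mid n,\; n' < n} \deg D_{m,m+n'} \;+\; \deg D_{m-1,m-1+n}, \]
but this does \emph{not} produce a prime $\fp \in \supp D_{m,m+n}$ outside the other supports. At every prime of good reduction, $\varphi$ extends over $\fo_\fp/\fp^k$, so a congruence $P_m \equiv P_{m+n'} \pmod{\fp^k}$ iterates to $P_m \equiv P_{m+n} \pmod{\fp^k}$; likewise $P_{m-1} \equiv P_{m-1+n} \pmod{\fp^k}$ implies $P_m \equiv P_{m+n} \pmod{\fp^k}$. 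Hence $D_{m,m+n'} \le D_{m,m+n}$ and $D_{m-1,m-1+n} \le D_{m,m+n}$ coefficient-wise, and the ``bad'' divisors are already supported inside $\supp D_{m,m+n}$. Your degree inequality can therefore hold purely by virtue of $D_{m,m+n}$ having \emph{higher multiplicity at the same primes}: e.g.\ $D_{m,m+n}=10\fp$, $D_{m,m+n/2}=\fp$, $D_{m-1,m-1+n}=\fp$ satisfies the inequality but leaves no prime of exact portrait $(m,n)$. Comparing degrees (which is what your intersection-theoretic identity $\deg D_{i,j}=h(P_i)+h(P_j)+O(1)$ measures) simply cannot detect this.

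This is exactly the point at which the paper uses the Mason--Stothers $abc$-theorem, and it is not where you locate it. You invoke $abc$ only to repair the $O(1)$ error coming from places in $S$; its essential role is instead to convert a height lower bound into a lower bound on the \emph{number of distinct primes}. The paper's Proposition~\ref{YamaUse} does precisely this: for a suitable separable $F$ of degree $\ge 3$ it shows $\#\{\fp : v_\fp(F(\varphi^{m+n-i}(\alpha)))>0\} \ge h_K(\varphi^{m+n-i}(\alpha)) - O(1)$, a radical bound rather than a degree bound. That count is then compared with \emph{upper} bounds on the number of primes lying in the bad sets (Proposition~\ref{old} and the sets $\cE_1,\cE_2$), which do follow from the elementary height inequality~\eqref{eq:new Liouville} much as you outline. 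Without the $abc$-step applied to the main term, the argument cannot separate ``many primes'' from ``few primes with high multiplicity,'' and the proof does not close.
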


The next two remarks explain why the conditions in Theorem 
\ref{MN} are necessary.

\begin{remark}\label{rem:FG vs IS}
Theorem \ref{MN} \textit{without} the conditions that 
$m\notin Y(\varphi,\alpha)$ and $n\notin X(\varphi)$
was essentially conjectured by Ingram-Silverman \cite{IS}. It was Faber and Granville 
\cite{FG}
who pointed out that the condition $n\notin X(\varphi)$
would be necessary. By Lemma~\ref{inv},  $n\in X(\varphi)$ if and only if either $\varphi$ has no point of minimum period $n$ (see Kisaka's classification
\cite{Kisaka} or \cite[Appendix B]{FG} for a complete list of all rational maps which have no point of minimum period $n$), or $n=2$ and $\varphi(z)$ is
linearly conjugate to $z^{-2}$. Note that Ingram-Silverman made
their conjectures over number fields while Faber-Granville even restricted
further to the field of rational numbers. Theorem \ref{MN} answers completely
the same question for function fields.
\end{remark}

\begin{remark}\label{rem:we vs FG}
Theorem \ref{MN} \textit{without} the condition that $m\notin Y(\varphi,\alpha)$
was essentially conjectured by Faber-Granville \cite[pp.~190]{FG}. We
briefly explain why this condition is also necessary. Suppose $M> 0$
such that $\varphi$ is totally ramified over $\varphi^{M}(\alpha)$. Write
$\beta=\varphi^M(\alpha)$. We may assume $\varphi^{M-1}(\alpha)=0$
and $\beta\neq \infty$ 
by making a linear change of variables. Therefore $\varphi$ has the form:
$$\varphi(z)=\frac{z^d}{\psi(z)}+\beta$$
where $\psi$ is a polynomial of degree at most $d$ satisfying $\psi(0)\neq 0$.
For almost all primes $\fp$ of good reduction and for every $a\in \bP^1(K)$,
we have that $\varphi(a)\equiv \beta$ mod $\fp$ if and only if 
$a\equiv 0$ mod $\fp$. Hence if $\varphi^{M+n}(\alpha)\equiv \beta$ mod $\fp$
then $\varphi^{M+n-1}(\alpha)\equiv 0$ mod $\fp$. So, for almost
all $n$, there does not exist $\fp$ such that 
$\alpha$ has portrait $(M,n)$ under the action of $\varphi$ mod $\fp$.
\end{remark}

We also note that the hypothesis that the function field has
characteristic 0 is used crucially in our proof because we employ in
our arguments Mason's \cite{Mason} and Stothers' \cite{Stothers}
$abc$-theorem for function fields of characteristic $0$ (see also
\cite{SilABC}).  It would be interesting to treat the question in
positive characteristic as well, but it appears that new ideas or
techniques may be required.  

	We have explained why the conditions $m\notin Y(\varphi,\alpha)$ and 
	$n\notin X(\varphi)$ are necessary. Hence the Ingram-Silverman-Faber-Granville 
	conjecture (which was originally made over number fields) 
	should be modified accordingly.  One may also adapt our proof in this
	paper to resolve their conjecture \textit{assuming the
          $abc$-conjecture} in the context of number fields.  We will
        treat this in a future paper.   

        The original question that motivates this paper is the
        ``simultaneous portrait problem'' (see Theorem \ref{arbitrary
          portraits}) over function fields in several parameters; this
        problem has no obvious analog over number fields.

	Since we are excluding places outside $S$,
	our conclusion involving the finiteness of $\cZ(\tau,s)$ is \textit{the best
	one can hope for}. We also note the remarkable \textit{uniformity} obtained here: the set
	$\cZ(\tau,s)$ only depends on $\varphi$, $K$, $s$, and the lower bound $\tau$ on 
	the canonical height
	of $\alpha$ under $\varphi$ rather than depending on $\alpha$.

The following result is an immediate consequence of Theorem~\ref{MN} in the case when $\varphi\in K[z]$ is a polynomial which is totally ramified at no point in $K$; in this case, $X(\varphi)=\emptyset$ and also $Y(\varphi,\alpha)=\emptyset$ for any $\alpha\in K$ (see Remark~\ref{rem:FG vs IS}).

\begin{cor}\label{eg:polynomial no total ramification}
	Let $K$, $\varphi$, $\tau$, $s$, and $S$ be as in Theorem \ref{MN}. Assume that 
	$\varphi\in K[z]$
	is a polynomial that is totally ramified at no point in $K$ (equivalently,
	$\varphi(z)$ is not linearly conjugate over $K$ to a polynomial of the form $z^d+c$). 
  Then there exists a finite set
  $\cZ(\tau,s)\subset \Z_{\ge 0}\times \N$ depending only on $K$, $\varphi$, $\tau$, and $s$ such that
  for every $\alpha\in K$ satisfying $\hhat_\varphi(\alpha)>\tau$ and
  for every
	$(m,n)\in \left(\Z_{\geq 0}\times \N\right) \setminus \cZ(\tau,s)$ there exists
	a place $\fp\notin S$ such that $\alpha$ has portrait $(m,n)$ mod $\fp$.
\end{cor}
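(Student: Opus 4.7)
The plan is to deduce the corollary directly from Theorem~\ref{MN} by showing that under the stated hypotheses the two obstruction sets $X(\varphi)$ and $Y(\varphi,\alpha)$ are both empty, so that Theorem~\ref{MN} applies with no constraint on $(m,n)$ beyond avoiding the finite set $\cZ(\tau,s)$.

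The first step is to dispose of $Y(\varphi,\alpha)$. Since $\varphi\in K[z]$ and $\alpha\in K$, every iterate $\varphi^{m-1}(\alpha)$ lies in $K$ (a finite point of $\bP^1$). By definition $m\in Y(\varphi,\alpha)$ precisely when $\varphi$ is totally ramified at $\varphi^{m-1}(\alpha)$, and the standing hypothesis rules out total ramification at any point of $K$. Hence $Y(\varphi,\alpha)=\emptyset$. I would then verify $X(\varphi)=\emptyset$ using the criterion recorded in Remark~\ref{rem:FG vs IS} (Lemma~\ref{inv}): one has $n\in X(\varphi)$ iff either $\varphi$ has no point of minimum period $n$, or $n=2$ and $\varphi$ is linearly conjugate to $z^{-2}$. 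The latter alternative is impossible because $\varphi$ is a polynomial. For the former, I would read off the list of exceptions from Kisaka's classification \cite{Kisaka} (or \cite[Appendix~B]{FG}) and check that the only polynomials appearing there are, up to linear conjugation, of the form $z^d+c$, which is precisely the case the corollary excludes. I expect this matching against the exceptional list to be the only nontrivial point in the whole argument.

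With $X(\varphi)$ and $Y(\varphi,\alpha)$ both empty, the conditions ``$m\notin Y(\varphi,\alpha)$'' and ``$n\notin X(\varphi)$'' in Theorem~\ref{MN} become vacuous. Applying Theorem~\ref{MN} to $\alpha\in K\subset\bP^1(K)$ with $\hhat_\varphi(\alpha)>\tau$ then yields the desired finite set $\cZ(\tau,s)$ and, for every $(m,n)\notin\cZ(\tau,s)$, a place $\fp\notin S$ of good reduction at which $\alpha$ has portrait $(m,n)$, which is the required conclusion.
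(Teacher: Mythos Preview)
Your proposal is correct and follows exactly the approach the paper takes: the paper states just before the corollary that it is ``an immediate consequence of Theorem~\ref{MN}'' because under the hypothesis one has $X(\varphi)=\emptyset$ and $Y(\varphi,\alpha)=\emptyset$ for any $\alpha\in K$, with a reference to Remark~\ref{rem:FG vs IS}. Your write-up simply unpacks that sentence, and your plan to dispose of the ``no period-$n$ point'' case by consulting Kisaka's list is precisely what the paper's reference to Remark~\ref{rem:FG vs IS} (and hence to \cite{Kisaka}/\cite[Appendix~B]{FG}) amounts to.
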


%
%

Theorem~\ref{MN} allows us to prove a result (see
Theorem~\ref{arbitrary portraits}) for simultaneous portraits of
complex numbers realized by polynomials in normal form. Also,
Theorem~\ref{MN} allows us to prove a strong uniform result for
realizing all possible portraits by almost any constant starting point
(see Theorem~\ref{thm:projection}). We will state in
Section~\ref{sect:applications} these two results, together with other
applications of our Theorem~\ref{MN}.

We sketch briefly the plan of our paper. We state in  Section~\ref{sect:applications} applications of Theorem~\ref{MN} (see  Theorems~\ref{arbitrary portraits}, \ref{thm:dual arbitrary portraits} and \ref{thm:projection}). In Section~\ref{subsect:canonical height} we introduce the notation and the basic notions used in the paper. We prove Theorem~\ref{MN} in Section~\ref{proof of main theorem} and then we prove its various applications in Section~\ref{proof of second main theorem}. Finally, we conclude our paper by asking several related questions in Section~\ref{sect:future}.



\section{Applications}
\label{sect:applications}

We say that a polynomial $\varphi(z)$ of degree $d$ is in \emph{normal form} if it is monic and its coefficient of $z^{d-1}$ equals $0$. Note that each polynomial $\varphi$ is linearly conjugate to a polynomial in normal form. Therefore, when discussing preperiodicity portraits in the family of polynomials of degree $d$, it makes sense to restrict the analysis to the case of polynomials in normal form.

Let $d\ge 2$ be an integer, let $k$ be an algebraically closed field of characteristic $0$, let $c_1,\dots, c_{d-1}\in k$, and let $(m_i,n_i)\in \Z_{\ge 0}\times \N$ for $i=1,\dots, d-1$. It is natural to ask whether there exists a polynomial $f\in k[z]$ in normal form and  of degree $d$ such that for each $i=1,\dots, d-1$, the point $c_i$ has preperiodicity portrait $(m_i,n_i)$ for the action of  $f(z)$.

Already Theorem~\ref{MN} solves the above question if $d=1$. Indeed, one considers the polynomial $f(z)=z^2+t\in K[z]$, where $K:=k(t)$ and then Theorem~\ref{MN} yields that at the expense of excluding finitely many portraits   (note also that $X(f)=Y(f,c_1)=\emptyset$ by Remark~\ref{rem:FG vs IS}), there exists a place $\fp$ of $K$ such that $c_1$ has preperiodicity portrait $(m_1,n_1)$ for the action of $f(z)$ modulo $\fp$. Reducing $f(z)$ modulo a place of $K$ is equivalent with specializing $t$ to a value in $k$, hence providing an answer to the above question if $d=1$. 

As a matter of notation, by a 
\textit{co-finite set of portraits} we mean a subset of $\Z_{\geq 0}\times \N$
whose complement is finite. Next result answers the above question of ``simultaneous multiportraits'', and it follows from Theorem~\ref{MN} coupled with an easy fact regarding canonical heights of constant points under the action of a non-isotrivial polynomial (see Lemma~\ref{lem:generic portrait}).

\begin{thm}
\label{arbitrary portraits}
Let $k$ be an algebraically closed field of characteristic 0, let $d\ge 2$ be an integer, and let $c_0,\dots, c_{d-2}\in k$ be $d-1$ distinct elements. Then there
exists a co-finite set of portraits $Z^{(0)}$ depending on $k$ and $d$ such that
for each $(m_0,n_0)\in Z^{(0)}$, there exists
a co-finite set of portraits $Z^{(1)}:=Z^{(1)}(c_0,m_0,n_0)$
depending on $k$, $d$, $c_0$, $m_0$, and $n_0$  such that for each
$(m_1,n_1)\in Z^{(1)}$, there exists a co-finite set
of portraits $Z^{(2)}:=Z^{(2)}(c_0,m_0,n_0,c_1,m_1,n_1)$ 
depending on $k$, $d$, $c_0$,..., $n_1$
such that for each $(m_2,n_2)\in Z^{(2)}$, and so on ...,
there exists 
a co-finite set of portraits $Z^{(d-2)}:=Z^{(d-2)}(c_0,...,n_{d-3})$ 
depending on $k$, $d$, $c_0$,..., $n_{d-3}$
such that for each $(m_{d-2},n_{d-2})\in Z^{(d-2)}$
there exist $a_0,\ldots,a_{d-2}\in k$
such that the following holds. For $0\leq i\leq d-2$,
the point $c_i$ has portrait $(m_i,n_i)$
under $z^d+a_{d-2}z^{d-2}+\ldots+a_1z+a_0$.
\end{thm}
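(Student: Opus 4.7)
The plan is to induct on $i$ from $0$ to $d-2$, at each step specializing one coefficient of the generic family
$$F(z;T_0,\ldots,T_{d-2}) \;:=\; z^d + T_{d-2}z^{d-2} + \cdots + T_1 z + T_0$$
via an application of Corollary \ref{eg:polynomial no total ramification} (or Theorem \ref{MN} directly when $d=2$). Concretely, at step $i$ set $M_i := \overline{k(T_{i+1},\ldots,T_{d-2})}$ (with $M_{d-2}=k$), suppose we have already produced $\hat{a}_0,\ldots,\hat{a}_{i-1}$ lying in the algebraic closure of $M_i(T_i)$, and let $K_i$ be a finite extension of $M_i(T_i)$ containing them; then $K_i$ is a function field of transcendence degree one over the algebraically closed constant field $M_i$. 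Form
$$\Phi_i(z) \;:=\; z^d + T_{d-2}z^{d-2} + \cdots + T_i z^i + \hat{a}_{i-1}z^{i-1} + \cdots + \hat{a}_0 \;\in\; K_i[z],$$
and assume inductively that $c_0,\ldots,c_{i-1}$ already have the prescribed portraits under $\Phi_i$ (vacuous at $i=0$).

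Because $T_i$ is transcendental over $M_i$, the polynomial $\Phi_i$ is non-isotrivial over $K_i$, and Lemma \ref{lem:generic portrait} therefore yields $\hhat_{\Phi_i}(c_i)>0$ for the constant point $c_i\in k\subseteq M_i\subseteq K_i$; for $d\geq 3$, the coefficient $T_{d-2}$ of $z^{d-2}$ in $\Phi_i$ is transcendental, so $\Phi_i$ is not linearly conjugate over $K_i$ to any $z^d+c$, and Corollary \ref{eg:polynomial no total ramification} applies. (The case $d=2$ is the single step handled by Theorem \ref{MN} directly, using that for $\Phi_0 = z^2+T_0$ the sets $X(\Phi_0)$ and $Y(\Phi_0,c_0)$ are finite.) Let $S_i$ be the finite set of places of $K_i$ consisting of all places of bad reduction for $\Phi_i$ together with those at which some polynomial identity witnessing the portraits $(m_j,n_j)$ of $c_j$ (for $j<i$) fails to reduce correctly. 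Applying Corollary \ref{eg:polynomial no total ramification} with $\tau := \hhat_{\Phi_i}(c_i)/2$ and $s := |S_i|$ yields a finite exceptional set $\cZ_i$ whose complement $Z^{(i)}\subseteq \Z_{\geq 0}\times\N$ is co-finite and depends only on the previously chosen data. For each $(m_i,n_i)\in Z^{(i)}$, Theorem \ref{MN} supplies a place $\fp_i\notin S_i$ of $K_i$ modulo which $c_i$ has portrait $(m_i,n_i)$; since the residue field of $\fp_i$ equals $M_i$, the place corresponds to a specialization $T_i\mapsto \hat{a}_i\in M_i$ (and each prior $\hat{a}_j$ reduces to an element of $M_i$), producing $\Phi_{i+1}$. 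The condition $\fp_i\notin S_i$ guarantees preservation of the portraits of $c_0,\ldots,c_{i-1}$, completing the inductive step.

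After step $i=d-2$ the constant field has collapsed to $M_{d-2}=k$, so $\Phi_{d-1}(z)\in k[z]$ is the required polynomial in normal form realizing all the prescribed portraits simultaneously. The main obstacle is verifying at each inductive step that $\Phi_i$ remains a non-isotrivial polynomial over $K_i$ with $\hhat_{\Phi_i}(c_i)>0$ and not linearly conjugate to $z^d+c$---these being the hypotheses of Corollary \ref{eg:polynomial no total ramification} (or Theorem \ref{MN} when $d=2$)---and additionally that the specialization at $\fp_i$ preserves the portraits established in previous steps, which reduces to absorbing into $S_i$ the finitely many places where the defining polynomial identities of those portraits can degenerate.
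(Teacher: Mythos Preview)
Your inductive framework mirrors the paper's proof closely, and your handling of the ``not conjugate to $z^d+c$'' hypothesis and of the exceptional set $S_i$ (via Lemma~\ref{lem:generic portrait}) is correct. However, there is a genuine gap at the heart of the induction: your justification that $\hhat_{\Phi_i}(c_i)>0$ does not go through.

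You write that non-isotriviality of $\Phi_i$, together with Lemma~\ref{lem:generic portrait}, yields $\hhat_{\Phi_i}(c_i)>0$. But Lemma~\ref{lem:generic portrait} concerns the reduction of a preperiodic point modulo almost all places; it says nothing about positivity of canonical height. Even if you replace this citation by the correct fact (Baker~\cite{Matt-nonisotrivial}) that a non-isotrivial map has only finitely many preperiodic points in $\bP^1(K_i)$, you still cannot conclude that $c_i$ is not among them. Indeed, by your own construction $\Phi_i$ already has $i$ constant preperiodic points, namely $c_0,\dots,c_{i-1}$, so non-isotriviality alone cannot force $\hhat_{\Phi_i}(c_i)>0$. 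You must rule out that $c_i$ happens to be yet another constant preperiodic point of $\Phi_i$.

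This is exactly what Lemma~\ref{preperiodicity lemma} provides, and it is the key quantitative input in the paper's proof that your sketch omits. That lemma shows, via an elementary valuation argument, that a polynomial $z^d+a_{d-2}z^{d-2}+\cdots+a_0$ over $K_i$ whose coefficients $a_{i+1},\dots,a_{d-2}$ lie in the constant field $M_i$ but with some $a_j\in K_i\setminus M_i$ for $j\le i$ admits \emph{at most $i$} constants $x\in M_i$ with $\hhat(x)<1/d$. Since $c_0,\dots,c_{i-1}$ are $i$ distinct constants of height $0$, they exhaust this list, and hence $\hhat_{\Phi_i}(c_i)\ge 1/d$. This both establishes positivity and gives a uniform lower bound $\tau=1/d$ independent of the earlier choices, which is cleaner than your $\tau=\hhat_{\Phi_i}(c_i)/2$. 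Once you insert this lemma at each step, your argument becomes essentially the paper's proof. (A minor additional point: for $d=2$ you assert $X(\Phi_0)$ and $Y(\Phi_0,c_0)$ are finite, but finiteness of $X$ or $Y$ would still exclude infinitely many portraits; you need them empty, which the paper verifies directly.)
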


We are interested next in the reverse situation from Theorem~\ref{arbitrary portraits}, i.e. given a set of $(d-1)$ \textit{distinct} portraits $(m_i,n_i)$, for which starting points $c_0,\dots, c_{d-2}\in k$ is there possible to find a polynomial $f\in k[z]$ of degree $d$ and in normal form such that the preperiodicity portrait of $c_i$ with respect to the action of $f(z)$ is $(m_i,n_i)$ for each $i=0,\dots, d-2$? In other words, Theorem~\ref{arbitrary portraits} tells us how many  portraits may be missed for a given set of starting points, while the next result gives information on how many tuples of starting points have to be excluded if a certain set of portraits is to be realized by those starting points.

\begin{thm}\label{thm:dual arbitrary portraits}
		Let $k$ be an algebraically closed field of characteristic 0, let $d\geq 2$ 
		be an integer, and let $(m_0,n_0),\ldots,(m_{d-2},n_{d-2})$
		be distinct elements in $\Z_{\geq 0}\times \N$. Then there exists
		a co-finite subset $T^{(0)}$ of $\bP^1(k)$
		depending on $k$ and $d$ such that for each
		$c_0\in T^{(0)}$, there exits a co-finite
		subset $T^{(1)}:=T^{(1)}(m_0,n_0,c_0)$
		of $\bP^1(k)$
		depending on $k$, $d$, $m_0$, $n_0$, and $c_0$
		such that for each $c_1\in T^{(1)}$, there exists
		a co-finite subset $T^{(2)}:=T^{(2)}(m_0,n_0,c_0,m_1,n_1,c_1)$
		depending on $k$, $d$, $m_0,\ldots,c_1$ such that
		for each $c_2\in T^{(2)}$, and so on ..., there exists
		a co-finite subset $T^{(d-2)}:=T^{(d-2)}(m_0,\ldots,c_{d-3})$
		depending on $k$, $d$, $m_0,\ldots,c_{d-3}$
		such that for each $c_{d-2}\in T^{(d-2)}$
		there exists $a_0,\ldots,a_{d-2}\in k$
		such that the following holds. For $0\leq i\leq d-2$, the point
		$c_i$ has portrait $(m_i,n_i)$
		under $z^d+a_{d-2}z^{d-2}+\ldots+a_1z+a_0$.
\end{thm}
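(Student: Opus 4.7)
The plan is to mirror the proof of Theorem~\ref{arbitrary portraits} but interchange the roles of starting points and coefficients throughout the induction. I would introduce auxiliary variables $s_0,\ldots,s_{d-2}$ that stand in for the (not yet chosen) starting points, work over the iterated function field $K:=k(s_0,\ldots,s_{d-2})$, and seek coefficients $A_0,\ldots,A_{d-2}\in K$ such that each $s_i$ has portrait $(m_i,n_i)$ under the polynomial $F(z)=z^d+A_{d-2}z^{d-2}+\ldots+A_0$. Once such an $F$ is built, specializing the $s_i$'s one at a time to values $c_i\in k$ produces a polynomial over $k$ realizing the prescribed portraits, and the co-finite sets $T^{(i)}$ arise as the sets of ``good'' specializations of $s_i$ at each stage.

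The construction of $F$ proceeds by induction on $i\in\{0,\ldots,d-2\}$. Suppose inductively that we have produced $F_{i-1}(z)\in k(s_i,s_{i+1},\ldots,s_{d-2})[z]$ in normal form, under which each already-specialized $c_j$ (for $j<i$) has portrait $(m_j,n_j)$, and whose coefficients still genuinely depend on $s_i,\ldots,s_{d-2}$. View $F_{i-1}$ as a polynomial over the transcendence-degree-one function field $L_i$ obtained by adjoining $s_i$ to the algebraic closure of $k(s_{i+1},\ldots,s_{d-2})$, and apply Theorem~\ref{MN} to $\alpha=s_i\in L_i$. This yields a finite exceptional set $\cZ_i$; assuming the prescribed $(m_i,n_i)\notin\cZ_i$, there exists a place $\fp_i$ of $L_i$ of good reduction at which $s_i$ has portrait $(m_i,n_i)$. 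Choosing any such place gives a specialization $s_i\mapsto c_i$ in the residue field, and absorbing the resulting value into the coefficient algebra yields $F_i$. The co-finite set $T^{(i)}$ is then the set of $c_i\in\bP^1(k)$ obtained as residues of places of good reduction at which every previously imposed portrait condition is preserved; it is co-finite because only finitely many places are excluded at each stage.

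The main obstacle will be verifying the canonical-height hypothesis $\hhat_{F_{i-1}}(s_i)>\tau$ for some $\tau>0$ at each inductive step, which in turn reduces to showing that $F_{i-1}$ is non-isotrivial as a polynomial over $L_i$ and that $s_i$ is not preperiodic for $F_{i-1}$. Both facts should follow, in the spirit of Lemma~\ref{lem:generic portrait}, from the assumed distinctness of the portraits $(m_i,n_i)$ together with the genuine dependence of $F_{i-1}$ on $s_i$ that is maintained throughout the induction. The distinctness hypothesis is essential here: a repeated portrait could force a coincidence that collapses the family and renders $F_{i-1}$ isotrivial in $s_i$. A secondary subtlety is the bookkeeping needed to ensure that each $T^{(i)}$ depends only on $k$, $d$, and the data $(m_j,n_j,c_j)_{j<i}$ as stipulated in the theorem; this amounts to checking that every choice in the inductive step depends only on this data and not on any hidden auxiliary parameter.
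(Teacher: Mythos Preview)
Your proposal has a genuine gap at the point where you invoke Theorem~\ref{MN}. That theorem, applied to a fixed starting point $\alpha$, produces a finite set $\cZ$ of \emph{portraits} that may fail; it does not produce a co-finite set of starting points for a \emph{fixed} portrait. When you apply Theorem~\ref{MN} with $\alpha=s_i$ over $L_i$, the output is an exceptional set $\cZ_i\subset\Z_{\ge 0}\times\N$, and you then write ``assuming the prescribed $(m_i,n_i)\notin\cZ_i$.'' But the portraits $(m_i,n_i)$ are given in the hypothesis, and there is no reason they avoid $\cZ_i$; this assumption is exactly what must be proved. Even granting it, Theorem~\ref{MN} supplies only \emph{one} place $\fp_i$ of $L_i$, and the residue of $s_i$ at that place lies in the constant field $\overline{k(s_{i+1},\dots,s_{d-2})}$, not in $k$. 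Your claim that ``$T^{(i)}$ is co-finite because only finitely many places are excluded'' conflates the set of good-reduction places with the set of places at which a prescribed portrait is realized; the latter need not be co-finite, and in any case its residues do not sweep out a co-finite subset of $\bP^1(k)$.

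The paper's argument keeps the \emph{coefficients} $a_0,\dots,a_{d-2}$ as the variables (there are no auxiliary $s_i$'s) and, at step $i$, applies Theorem~\ref{thm:projection} rather than Theorem~\ref{MN} to the polynomial $f_i$ over $K_i=k_i(a_i,a_{i-1,i-1},\dots,a_{0,i-1})$. Theorem~\ref{thm:projection} is precisely the dual statement you need: for a non-isotrivial $\varphi$ and any fixed $(m,n)\notin W(\varphi)$, all but finitely many $\alpha\in\bP^1(k)$ attain portrait $(m,n)$ at some place outside $S$. The distinctness hypothesis enters not to prevent isotriviality (Lemma~\ref{lem:iso} handles that directly), but to verify $(m_i,n_i)\notin W(f_i)$: by Lemma~\ref{preperiodicity lemma} the only constants $c\in k$ that are preperiodic for $f_i$ are $c_0,\dots,c_{i-1}$, and since their portraits $(m_0,n_0),\dots,(m_{i-1},n_{i-1})$ are all different from $(m_i,n_i)$, no constant has portrait $(m_i,n_i)$, whence $(m_i,n_i)\notin W(f_i)$. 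Specializing at a place outside a finite $S_i$ (chosen via Lemma~\ref{lem:generic portrait} so the earlier portraits persist) then yields $T^{(i)}$ co-finite in $\bP^1(k)$, with the correct dependence on data.
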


Theorem~\ref{thm:dual arbitrary portraits} follows through an argument
similar to the proof of Theorem~\ref{arbitrary portraits} once we
prove a strong uniform result for the set of possible exceptions of
starting points which cannot realize a given portrait, i.e. the
``dual'' statement from Theorem~\ref{MN}. For this ``dual'' result we
require first the definition of \emph{isotrivial rational maps}. 

\begin{defi}
\label{defi:isotrivial}
Let $k$ be an algebraically closed field of characteristic $0$, and let $K$ be a finitely generated function field over $k$ of transcendence degree equal to $1$. Let $\varphi\in K(z)$ of degree $d\geq 2$. We say that $\varphi(z)$
is isotrivial if there is $\sigma\in \Kbar(z)$ of degree $1$ such that
$\sigma^{-1}\circ \varphi\circ \sigma\in k(z)$. 
\end{defi}

Let $k$, $K$, and $\varphi$ be as in Definition~\ref{defi:isotrivial}. We let $W(\varphi)$ be the set of $(m,n)\in \Z_{\geq 0}\times \N$
such that the set of points having portrait
$(m,n)$ with respect to $\varphi$ is either empty or it is a (proper) subset of $\bP^1(k)$. Since $\varphi$ is not isotrivial, there are at most finitely many $x\in \bP^1(k)$ which are preperiodic for $\varphi$ (see \cite{Matt-nonisotrivial}); hence Theorem~\ref{MN} yields that  there are at most finitely many portraits $(m,n)\in W(\varphi)$ such that $n\notin X(\varphi)$.

\begin{thm}
\label{thm:projection}
Let $S$ be a finite set of places of $K$ containing all places of bad reduction. Let $\varphi\in K(z)$ be non-isotrivial. Then there is a finite
set $T(S)\subset \bP^1(k)$ depending only on $K$, $\varphi$, and $S$ satisfying 
the following property: for every $(m,n)\in \left(\Z_{\geq 0}\times \N\right)\setminus W(\varphi)$ and for
every $\alpha\in \bP^1(k)\setminus T(S)$, there is a place $\fp\in \Omega_K\setminus S$
such that $\alpha$ has portrait $(m,n)$ under the action of $\varphi$ modulo $\fp$.
\end{thm}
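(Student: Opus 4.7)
The plan is to reduce the statement to Theorem~\ref{MN} for all $\alpha\in\bP^1(k)$ of uniformly large canonical height, and then to absorb the finitely many portraits in the exception set of Theorem~\ref{MN} by a separate specialization argument based on a non-constant preperiodic point of the given portrait.

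Since $\varphi\in K(z)$ is non-isotrivial, Baker's theorem on canonical heights \cite{Matt-nonisotrivial} furnishes both a finite set $T_0\subset\bP^1(k)$ of constant preperiodic points of $\varphi$ and a uniform constant $\tau>0$ with $\hhat_\varphi(\alpha)>\tau$ for every $\alpha\in\bP^1(k)\setminus T_0$. Applying Theorem~\ref{MN} with this $\tau$ and $s:=\#S$ produces a finite exception set $\cZ(\tau,s)\subset\Z_{\geq 0}\times\N$. I would next verify the auxiliary hypotheses of Theorem~\ref{MN} outside one further finite set. First, $n\notin X(\varphi)$ is automatic when $(m,n)\notin W(\varphi)$: non-emptiness of the portrait-$(m,n)$ locus together with Lemma~\ref{inv} forces the only alternative to be $n=2$ with $\varphi$ conjugate to $z^{-2}$, contradicting non-isotriviality. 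Second, let $R\subset\bP^1(\Kbar)$ denote the finite (at most two by Riemann--Hurwitz) set of totally ramified points of $\varphi$, and set $T_1=\{\alpha\in\bP^1(k)\setminus T_0:\varphi^j(\alpha)\in R\text{ for some }j\ge 0\}$. The identity $\hhat_\varphi(\varphi^j(\alpha))=d^j\hhat_\varphi(\alpha)$ combined with $\hhat_\varphi(\alpha)>\tau$ bounds $j$ by $\log_d(\hhat_\varphi(\gamma)/\tau)$ when $\gamma\in R$ is not preperiodic, while preperiodic $\gamma\in R$ would force $\alpha\in T_0$; the finitely many resulting fibers of $\varphi^j$ meet $\bP^1(k)$ in finitely many points, so $T_1$ is finite and $Y(\varphi,\alpha)=\emptyset$ for every $\alpha\in\bP^1(k)\setminus(T_0\cup T_1)$. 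For such $\alpha$ and every $(m,n)\in(\Z_{\geq 0}\times\N)\setminus(W(\varphi)\cup\cZ(\tau,s))$, Theorem~\ref{MN} directly delivers the required place $\fp\in\Omega_K\setminus S$.

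The main obstacle is covering the finitely many portraits remaining in $\cZ(\tau,s)\setminus W(\varphi)$, to which Theorem~\ref{MN} does not apply. For each such $(m,n)$, the hypothesis $(m,n)\notin W(\varphi)$ combined with non-isotriviality (so that the portrait-$(m,n)$ locus is finite, rather than equal to $\bP^1(k)$) provides a non-constant preperiodic point $\beta=\beta_{m,n}\in\bP^1(\Kbar)\setminus\bP^1(k)$ of portrait $(m,n)$. Let $L=K(\beta)$, with associated smooth projective curve $C'/k$ covering the curve of $K$, and let $\tilde S\subset\Omega_L$ be the finite union of the places of $L$ lying above $S$, the places of bad reduction for $\varphi$ over $L$, and the finitely many places at which the portrait of $\beta$ under $\varphi$ collapses strictly. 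Because $\beta\colon C'\to\bP^1_k$ is a non-constant morphism of smooth projective $k$-curves, it is surjective, so every $\alpha\in\bP^1(k)\setminus\beta(\tilde S)$ satisfies $\beta\equiv\alpha\pmod{\fp'}$ for some $\fp'\in\Omega_L\setminus\tilde S$; the restriction of $\fp'$ to $K$ then gives a place $\fp\notin S$ of good reduction at which the constant $\alpha=\beta\bmod\fp'$ inherits portrait $(m,n)$ from $\beta$ under the common reduction $\overline{\varphi}_\fp=\overline{\varphi}_{\fp'}$. Setting $T(S)=T_0\cup T_1\cup\bigcup_{(m,n)\in\cZ(\tau,s)\setminus W(\varphi)}\beta_{m,n}(\tilde S_{m,n})$, a finite union of finite sets, yields the desired finite set $T(S)$.
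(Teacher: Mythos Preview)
Your proof is correct and follows the same two-stage strategy as the paper: invoke Theorem~\ref{MN} uniformly on $\bP^1(k)$ outside a finite set (using Baker's lower height bound and the finiteness of $\{\alpha:Y(\varphi,\alpha)\neq\emptyset\}$, which is the paper's Lemma~\ref{lem:finite}), and then dispose of the finitely many leftover portraits $(m,n)\in\cZ(\tau,s)\setminus W(\varphi)$ by a specialization argument.

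The only noticeable difference is in that last specialization step. The paper works scheme-theoretically over $K$: it spreads $\varphi$ out over an open $V\subset C\setminus S$, cuts out the locally closed locus $U\subset\bP^1_k\times_k V$ of points with exact portrait $(m,n)$, and appeals to Chevalley's theorem to conclude that the projection $\rho(U)\subset\bP^1_k$ is constructible and hence co-finite (its failure to be finite being exactly the hypothesis $(m,n)\notin W(\varphi)$). You instead select a single non-constant $\beta\in\bP^1(\Kbar)$ of portrait $(m,n)$, pass to $L=K(\beta)$, and use surjectivity of the morphism $\beta\colon C'\to\bP^1_k$ together with the paper's Lemma~\ref{lem:generic portrait} (your ``portrait does not collapse'' set). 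Your route is a bit more elementary---no Chevalley, just surjectivity of non-constant maps of curves---at the cost of a base change to $L$, which is harmless since $\alpha\in\bP^1(k)$ and $\varphi\in K(z)$ make the reduction independent of the extension (Lemma~\ref{L2}). Either variant yields, for each of the finitely many $(m,n)$, a co-finite set of admissible $\alpha$, and the finite union gives $T(S)$.
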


\begin{remark}\label{rem:dual main}
	From Lemma~\ref{inv} and Corollary~\ref{cor:all bad n}, we have
	that $(m,n)\notin W(\varphi)$ if and only if $n\notin X(\varphi)$ 
	and
	some point of portrait $(m,n)$ is not constant. The first condition $n\notin 
	X(\varphi)$
	is necessary for Theorem \ref{MN} which will be used in the proof of Theorem~\ref{thm:projection}. Now, if all points of portrait $(m,n)$ are contained in $\bP^1(k)$, then for some $\alpha\in \bP^1(k)$ which is not a point with portrait $(m,n)$ for $\varphi$, we cannot find a place $\fp$ of $K$ such that the portrait of $\alpha$ for $\varphi$ modulo $\fp$ is $(m,n)$ because that would mean that $\alpha$ is in the same residue class modulo $\fp$ as another point in $\bP^1(k)$ (which has portrait $(m,n)$ for $\varphi$ globally). 
\end{remark}

We expect Theorem \ref{thm:dual arbitrary portraits} remains
valid without the condition that the given portraits are distinct. In Theorem 
\ref{thm:dual arbitrary portraits}, we note that the co-finite
set $T^{(i)}$ depends on the previously chosen points $c_0,\ldots,c_{i-1}$
\textit{together with} the portraits
$(m_0,n_0)$,..., $(m_{i-1},n_{i-1})$. It is an interesting problem
to relax such dependence on portraits, for which we present the following result
for cubic polynomials in normal form. By a co-countable subset of a set, 
we mean a subset whose complement is countable. 
%
%
\begin{cor}\label{cubic}
  Suppose that $k$ is algebraically closed of characteristic 0.  Then there 
  exist a co-finite subset $U^{(1)}$ of $k$ such that for every
  $c_1\in U^{(1)}$, there exists a co-countable subset $U^{(2)}(c_1)$ of $k$
  depending on $c_1$ such that for every 
  $c_2\in U^{(2)}(c_1)$, the following holds.
  For every pair
  of portraits $(m_1,n_1)$ and $(m_2,n_2)$, there exist $a,b\in k$
  such that for each $i=1,2$, $c_i$ has portrait $(m_i,n_i)$ under $z^3+az+b$. 
  
  As a consequence, when $k$ is uncountable
  there exist uncountably many $(c_1,c_2)\in k^2$
  such that for every pair of portraits $(m_1,n_1)$ and $(m_2,n_2)$, there exist $a,b\in k$
  such that for each $i=1,2$, $c_i$ has portrait $(m_i,n_i)$ under $z^3+az+b$.
\end{cor}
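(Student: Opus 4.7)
The plan is to combine Theorem~\ref{thm:dual arbitrary portraits} (which handles pairs of distinct portraits) with a family argument using Theorem~\ref{thm:projection} (for equal portraits), and to take countable intersections of the resulting exceptional sets.

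For pairs of distinct portraits, apply Theorem~\ref{thm:dual arbitrary portraits} with $d=3$: this yields a co-finite subset $T^{(0)}\subset\bP^1(k)$ depending only on $k$ and, for each $c_1\in T^{(0)}$ and each portrait $(m_1,n_1)$, a co-finite subset $T^{(1)}(m_1,n_1,c_1)\subset\bP^1(k)$ (independent of $(m_2,n_2)$) such that for every $c_2\in T^{(1)}(m_1,n_1,c_1)$ and every $(m_2,n_2)\neq(m_1,n_1)$ there exist $a,b\in k$ realizing the pair under $z^3+az+b$. Since there are countably many portraits $(m_1,n_1)$, the intersection $\bigcap_{(m_1,n_1)}T^{(1)}(m_1,n_1,c_1)$ is a co-countable subset of $\bP^1(k)$; this handles all pairs of distinct portraits.

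For equal portraits $(m,n)$ and $c_1\in k$ outside a finite exceptional set $F$, consider the algebraic locus $D_{m,n,c_1}=\{(a,b)\in\bA^2_k:\varphi_{a,b}^{m+n}(c_1)=\varphi_{a,b}^m(c_1)\}$, where $\varphi_{a,b}(z)=z^3+az+b$. It is a proper closed curve in $\bA^2$ (as its defining polynomial is non-zero in $(a,b)$ for every $c_1$, since for generic $(a,b)$ the point $c_1$ is not preperiodic), so one may select an irreducible component $C_{m,n,c_1}$ on whose generic point $c_1$ has portrait exactly $(m,n)$. Let $K:=k(C_{m,n,c_1})$ and consider the induced family $\varphi_t(z)=z^3+a(t)z+b(t)\in K[z]$. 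A direct check shows $\varphi_t$ is non-isotrivial (being in normal form with non-constant coefficients), that $X(\varphi_t)=\emptyset$ (the only totally ramified point of a generic such $\varphi_t$ is $\infty$), and that the portrait-$(m,n)$ locus of $\varphi_t$ over $K$ contains a non-constant point (since requiring a fixed $p\in k\setminus\{c_1\}$ to also have portrait $(m,n)$ is a further codimension-one condition along $C_{m,n,c_1}$, generically failing for $c_1\notin F$); thus $(m,n)\notin W(\varphi_t)$ by Remark~\ref{rem:dual main}. Applying Theorem~\ref{thm:projection} to $\varphi_t\in K(z)$ yields a finite $T_{m,n,c_1}\subset\bP^1(k)$ such that for every $c_2\in\bP^1(k)\setminus T_{m,n,c_1}$ there is a place of $K$---equivalently, a specialization $(a(t_0),b(t_0))\in k^2$ on $C_{m,n,c_1}$---at which $c_2$ has portrait $(m,n)$ under $\varphi_{t_0}$, as needed.

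Assembling, define $U^{(1)}=(T^{(0)}\cap k)\setminus F$, co-finite in $k$, and for each $c_1\in U^{(1)}$ set
\[
U^{(2)}(c_1)=\Big(\bigcap_{(m_1,n_1)}T^{(1)}(m_1,n_1,c_1)\Big)\ \cap\ \Big(k\setminus\bigcup_{(m,n)}T_{m,n,c_1}\Big),
\]
which is co-countable in $k$, being a countable intersection of co-finite sets minus a countable union of finite sets. For any $c_1\in U^{(1)}$, $c_2\in U^{(2)}(c_1)$, and any portrait pair $\pi$---distinct case via Theorem~\ref{thm:dual arbitrary portraits}, equal case via the family construction---the required $a,b\in k$ exist, proving the first part; the uncountable consequence is immediate, as $U^{(1)}$ and $U^{(2)}(c_1)$ are uncountable whenever $k$ is. The main obstacle is verifying that the exceptional set $F\subset k$ is a fixed finite set independent of $(m,n)$, capturing those $c_1$ for which either the component $C_{m,n,c_1}$ with generic portrait exactly $(m,n)$ fails to exist or the associated family $\varphi_t$ has $(m,n)\in W(\varphi_t)$; this demands a uniform geometric analysis of the dynatomic-type loci in the cubic family $z^3+az+b$, exploiting that the only symmetry of this family is $z\mapsto -z$ (coupled with $b\mapsto -b$) so that constant portrait-$(m,n)$ points force a symmetry relation on $c_1$, bounded by finitely many possibilities.
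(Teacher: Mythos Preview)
Your overall architecture—two nested applications of Theorem~\ref{thm:projection} followed by a countable intersection—matches the paper's. The gap is precisely the one you flag at the end: you have not shown that the exceptional set $F$ is finite and independent of $(m,n)$, and the symmetry sketch you offer is not a proof. The paper closes this gap by a direct argument that makes $F$ \emph{empty}, and in doing so removes the need to split into distinct versus equal portrait pairs.

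Concretely, after the first application of Theorem~\ref{thm:projection} (to $z^3+az+b$ over $\overline{k(a)}(b)$) one has, for each $c_1\in U^{(1)}$ and each $(m_1,n_1)$, a specialization $\bar b\in\overline{k(a)}$ with $c_1$ of portrait $(m_1,n_1)$ under $\varphi:=z^3+az+\bar b$ over $K_2:=k(a,\bar b)$. The paper then proves $W(\varphi)=\emptyset$ outright. Since $a$ is transcendental over $k$, Lemma~\ref{preperiodicity lemma} with $m=1$ forces $c_1$ to be the \emph{unique} constant preperiodic point of $\varphi$; hence every $(m_2,n_2)\neq(m_1,n_1)$ already lies outside $W(\varphi)$. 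For $(m_1,n_1)$ itself the paper exhibits, by a short case check on $n_1$ (using that $a\neq 0$ so $\varphi$ has no finite totally ramified point, and that $z^3+(a-1)z+\bar b$ is not a perfect cube), a second point $\gamma\neq c_1$ of portrait $(m_1,n_1)$, which is therefore non-constant. This handles all $(m_2,n_2)$ uniformly; one then applies Theorem~\ref{thm:projection} again (with $S$ enlarged via Lemma~\ref{lem:generic portrait} by the finitely many places where $c_1$ loses portrait $(m_1,n_1)$) and intersects over the countably many pairs.

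The same idea repairs your curve argument without any $F$: on a component $C_{m,n,c_1}$ the coordinate $a$ cannot be constant (otherwise Lemma~\ref{preperiodicity lemma} with $m=0$ would forbid \emph{any} constant preperiodic point, contradicting the preperiodicity of $c_1$), so Lemma~\ref{preperiodicity lemma} with $m=1$ and the explicit construction of $\gamma$ give $(m,n)\notin W(\varphi_t)$ for every $c_1\in U^{(1)}$. Your dimension-count argument (``requiring a fixed $p\in k$ to also have portrait $(m,n)$ is a further codimension-one condition'') does not by itself prove what you need, since it only rules out each \emph{specific} constant $p$, not the possibility that all portrait-$(m,n)$ points are constant.
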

  






\section{Preliminaries}
\label{subsect:canonical height}

\subsection{Good reduction of rational maps}
\label{good reduction for rational maps}

If $\varphi:\bP^1\to\bP^1$
is a morphism defined over $K$,
then (fixing a choice of homogeneous coordinates)
there are relatively prime homogeneous polynomials $F,G\in K[X,Y]$
of the same degree $d=\deg\varphi$ such that
$\varphi([X,Y])=[F(X,Y):G(X,Y)]$.  
In affine coordinates, $\varphi(z)=F(z,1)/G(z,1)\in K(z)$
is a rational function in one variable. 
Note that by our choice of coordinates,
$F$ and $G$ are uniquely defined up to a nonzero constant
multiple.

Let $\fp$ be a place of
$K$ with valuation ring $\fo_\fp$ and residue field $k$. We define as follows the reduction modulo $\fp$ of a point $P\in \bP^1(K)$. We let $x,y\in \fo_v$ not both in the maximal ideal of $\fo_\fp$ such that $P=[x:y]$ and then the reduction of $P$ modulo $\fp$ is defined to be $r_\fp(P):=[\bar{x}:\bar{y}]$, where $\bar{z}\in k$ is the reduction modulo $\fp$ of the element $z\in \fo_\fp$.

Let 
$\varphi:\bP^1\lra\bP^1$ be a morphism over $K$, 
given by $\varphi([X,Y])=[F(X,Y):G(X,Y)]$, where
$F,G\in \fo_\fp[X,Y]$ are relatively prime homogeneous
polynomials of the same degree
such that at least one coefficient
of $F$ or $G$ is a $\fp$-adic unit.
Let $\varphi_\fp :=[F_\fp:G_\fp]$, where
$F_\fp,G_\fp\in k[X,Y]$ are the reductions
of $F$ and $G$ modulo $\fp$.
We say that $\varphi$ has
{\em good reduction} at $\fp$ if
$\varphi_\fp:\bP^1(k)\lra\bP^1(k)$ is a morphism of the same
degree as $\varphi$. Equivalently, $\varphi$ has good reduction at $\fp$ if $\varphi$ extends as a morphism to the fibre of $\bP^1_{{\rm Spec}(\fo_\fp)}$ above $\fp$. For all but
finitely many
places $\fp$ of $K$, the map $\varphi$ has good reduction at $\fp$ (for more details, see the
comprehensive book of Silverman \cite[Chapter 2]{Silverman-book}).

If $\varphi\in K[z]$ is a polynomial, we can give
the following elementary criterion for good reduction:
$\varphi$ has good reduction at $v$ if and only if all coefficients of $\varphi$ are
$v$-adic integers, and its leading coefficient is a $v$-adic unit. For simplicity, we will always use this criterion when we choose a place $v$ of good reduction for a polynomial $\varphi$.


\subsection{Absolute values and heights in function fields}
%
For any finite extension $L/K$ we let $\Omega_L$ be the set of places of $L$.
For $\fq\in \Omega_L$ and $\fp\in \Omega_K$, if $\fq\mid_K=\fp$ then we
write $\fq\mid \fp$ and let $v_\fq$ and $|\cdot|_\fq$ respectively denote
the extension of $v_\fp$ and of $|\cdot|_\fq$ on $L$.
For every $\fq \in \Omega_\fq$, we let $e(\fq)$ be the ramification index for the extension of places  $\fq\mid \fp$ where $\fp=\fq\mid_K$.


For each $x\in \Kbar$ we define its Weil height as
$$h_K(x)=\frac{1}{[K(x):K]}\sum_{\fp\in \Omega_K}\sum_{\substack{\fq\in \Omega_{K(x)}\\ \fq\mid \fp}}  e(\fq)\cdot \log^+|x|_\fq,$$
where always $\log^+(z):=\log\max\{1,z\}$ for any real number $z$. We prefer to use the notation $h_K$ for the Weil height (normalized with respect to $K$) in order to emphasize the dependence on the ground field $K$ for our definition of the height. For example, if $L/K$ is a finite field extension, and $x\in \Kbar$, then $h_L(x)=[L:K]\cdot h_K(x)$. We extend $h_K$ on $\bP^1(\Kbar)$ by
$h_K(\infty)=0$.

Let $x$ and $y$ be distinct elements of $\bP^1(K)$, we have the following
inequality:
\begin{equation}\label{eq:new Liouville}
\#\{\fp\in\Omega_K:\ r_\fp(x)=r_\fp(y)\}\leq 2(h_K(x)+h_K(y))
\end{equation}
To prove this, we assume that $x,y\in K$ since the case $x=\infty$
or $y=\infty$ is easy. The set in the left-hand side of 
(\ref{eq:new Liouville}) is contained in:
$$\{\fp\in\Omega_K:\ |x-y|_\fp<1\}\cup\{\fp\in\Omega_\fp:\ |x|_\fp>1\ \text{and}\ 
|y|_\fp>1\}$$
whose cardinality is bounded above by:
$$h_K(x-y)+h_K(x)+h_K(y)\leq 2(h_K(x)+h_K(y)).$$


\subsection{Canonical heights for rational maps}

If $\varphi\in\Kbar(z)$ is a rational map of degree  $d\ge 2$, then for each point $x\in\bP^1(\Kbar)$, following \cite{Call-Silverman} we define the canonical height of $x$ under the action of $\varphi$ by:
$$\hhat_\varphi(x)=\lim_{n\to\infty}\frac{h_K(\varphi^n(x))}{d^n}.$$

According to \cite{Call-Silverman}, there is a constant $C_\varphi$ depending
only on $K$ and $\varphi$ such that $| h_K(z) - \hhat_\varphi(z)| <
C_\varphi$ for all $z \in \bP^1(\Kbar)$.

\section{Proof of Theorem \ref{MN}}
\label{proof of main theorem}
Throughout this section, $k$ is an algebraically closed field of characteristic $0$, $K$ is a finitely generated function field over $k$ of transcendence degree equal to $1$, and $\varphi\in K(z)$ is a rational function of degree $d>1$.  Throughout this section, unless stated otherwise all constants depend on $K$
and $\varphi$.
If a constant depends on other arguments, our notation will
clearly indicate them. For example, $C_1,B_2,D_3,\ldots$ denote constants depending
on $K$ and $\varphi$ only, while $C_4(\alpha,\beta,\gamma,\ldots)$ denotes
a constant depending on $K,\varphi,\alpha,\beta,\gamma,\ldots$. 

\subsection{A preliminary estimate} 
At each place $\fp$ of $K$, we use the {\it chordal metric} $d_\fp(\cdot,
\cdot)$ defined as 
\[ d_\fp ( [x:y], [a:b] ) = \frac{ |xb - ya|_\fp}{\left(\max (|x|_\fp,
    |y|_\fp) \right) \left(\max (|a|_\fp, |b|_\fp) \right)}. \]
We see then that for any place $\fp$, we have $r_\fp( [x:y]) =
r_\fp( [a:b] )$ if and only if  $d_\fp ( [x:y], [a:b] ) < 1$. 

We will need the following technical result:
\begin{prop}\label{old}
  Let  $\tau,\delta > 0$ be real numbers, let $i\ge 1$ be an integer, let $\alpha, \beta \in K$ such
  that $\hhat_{\varphi}(\alpha) \geq \tau > 0$, and let $F(z)$ be a monic, separable polynomial with
  coefficients in $K$ whose roots $\gamma_j$ satisfy the following conditions:
\begin{enumerate}
\item $\varphi^i(\gamma_j)=\beta$ for each $j$; 
\item each $\gamma_j$ is not periodic; and
\item for each $j$ and for each $\ell=0,\dots, i-1$, $\varphi^\ell(\gamma_j)\ne \beta$.
\end{enumerate}
For each positive integer $n\geq i$, we let $Z_n$ be the set of places $\fp$ of $K$ such that either
  $\varphi$ has bad reduction at $\fp$ or 
  \begin{equation}\label{eq:in prop}
  \max(d_\fp(\varphi^m(\alpha), \beta), 
|F(\varphi^{n-i}(\alpha))|_\fp   ) <1
  \end{equation}
 for some positive integer $m <
n$.  Then there are constants $C_1(\delta, i, \tau)$,
$C_2(\delta, i, \tau)$, and $B(\delta, i, \tau)$ depending only on $i$, $\delta$, $\tau$, and $\varphi$ such that for 
  all positive integers $n > B(\delta, i, \tau)$, we have
\begin{equation}\label{oldeq}
\# Z_n \leq \delta h_K(\varphi^n(\alpha)) +
(C_1(\delta, i, \tau)+ 2n) h_K(\beta) + C_2(\delta, i, \tau)
\end{equation} 
\end{prop}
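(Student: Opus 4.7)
The plan is to extract, for each $\fp\in Z_n$ of good reduction, two non-trivial congruences of the form $r_\fq(P)=r_\fq(Q)$ with $P\neq Q$ in $L$ (a finite extension of $K$ splitting $F$), and then to count such $\fp$ via the basic inequality (\ref{eq:new Liouville}) applied in $L$. Passing to $L$ costs only a fixed multiplicative factor $[L:K]$, absorbed into $C_1$ and $C_2$, while the finitely many bad-reduction places contribute to $C_2$. For each remaining $\fp$, fix witnesses $m=m(\fp)\in\{1,\dots,n-1\}$ and $\gamma=\gamma_{j(\fp)}$ with $r_\fp(\varphi^m(\alpha))=r_\fp(\beta)$ and $r_\fq(\varphi^{n-i}(\alpha))=r_\fq(\gamma)$ for some $\fq\mid\fp$; applying $\varphi^i$ to the second congruence and using hypothesis (1) immediately yields $r_\fp(\varphi^n(\alpha))=r_\fp(\beta)$.

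The analysis then splits on the value of $\epsilon(\fp):=m(\fp)-(n-i)$.  If $\epsilon=0$, the two congruences force $r_\fq(\gamma)=r_\fq(\beta)$; hypothesis (3) with $\ell=0$ gives $\gamma\ne\beta$, so (\ref{eq:new Liouville}), together with the elementary estimate $\sum_j h_L(\gamma_j)=O(h_K(\beta))$, bounds this case by $O_i(h_K(\beta))$.  If $0<\epsilon<i$, applying $\varphi^{\epsilon}$ to the second congruence and comparing with the first yields $r_\fq(\varphi^{\epsilon}(\gamma))=r_\fq(\beta)$; hypothesis (3) at $\ell=\epsilon$ supplies $\varphi^{\epsilon}(\gamma)\ne\beta$, and summing (\ref{eq:new Liouville}) over the $O_i(1)$ pairs $(\epsilon,\gamma)$ gives $O_i(h_K(\beta))$.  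If $\epsilon<0$, i.e.\ $m<n-i$, applying $\varphi^{n-i-m}$ to the first congruence and comparing with the second produces
\[
    r_\fq\bigl(\varphi^{n-i-m}(\beta)\bigr)=r_\fq(\gamma).
\]
Hypothesis (2) rules out $\varphi^{n-i-m}(\beta)=\gamma$ globally: equality would give $\varphi^{n-m}(\beta)=\varphi^i(\gamma)=\beta$, forcing $\beta$ to be periodic and placing $\gamma=\varphi^{n-i-m}(\beta)$ inside its cycle, contradicting the non-periodicity of $\gamma$.

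The hard part will be estimating this last case sharply enough to produce the coefficient $\delta$, rather than a fixed constant depending on $i$, in front of $h_K(\varphi^n(\alpha))$.  A crude application of (\ref{eq:new Liouville}) per pair $(m,\gamma)$ bounds the contribution by $2\bigl(h_L(\varphi^{n-i-m}(\beta))+h_L(\gamma)\bigr)=O(d^{n-i-m}h_K(\beta))$, which sums over $m$ to a geometric series of order $d^{n-i}h_K(\beta)$ --- exponentially larger than the linear-in-$n$ coefficient of $h_K(\beta)$ permitted by the conclusion.  I would overcome this by invoking the Mason--Stothers $abc$-theorem for function fields of characteristic $0$, applied to an algebraic identity relating $\varphi^m(\alpha)$, $\varphi^{n-i}(\alpha)$, $\beta$, and $\gamma$.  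The simultaneous truncated vanishings $v_\fq(\varphi^m(\alpha)-\beta)\geq 1$ and $v_\fq(\varphi^{n-i}(\alpha)-\gamma)\geq 1$ constrain the place $\fq$ far more tightly than either alone; converting this constraint via $abc$ should yield a case estimate of the shape $\#Z_n^{(\epsilon<0)}\leq \eta_n\cdot h_K(\varphi^n(\alpha))+O(n\,h_K(\beta))$, where $\eta_n\to 0$ as $n\to\infty$ thanks to the exponential growth of $h_K(\varphi^n(\alpha))$ against the fixed-degree terms in the identity, together with the uniform lower bound $\hhat_\varphi(\alpha)\geq\tau$.  Choosing $n\geq B(\delta,i,\tau)$ makes $\eta_n<\delta$, and assembling the three cases together with the bad-reduction places and height-comparison constants into $C_1(\delta,i,\tau)$ and $C_2(\delta,i,\tau)$ completes the proof.
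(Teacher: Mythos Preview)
Your treatment of the cases $\epsilon(\fp)\geq 0$ matches the paper's handling of its set $Y_2$, but the case $\epsilon(\fp)<0$ is where your proposal goes wrong. You try to exploit the combined congruence $r_\fq(\varphi^{n-i-m}(\beta))=r_\fq(\gamma)$ and then appeal to Mason--Stothers for an unspecified ``algebraic identity''; however no such identity is available here (the quantities involved are values of iterates at variable points, not coefficients of a fixed $abc$-triple), and in fact the paper proves Proposition~\ref{old} by completely elementary means, without invoking the $abc$-theorem at all.

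The point you are missing is that for small $m$ you should \emph{discard} the second condition $|F(\varphi^{n-i}(\alpha))|_\fp<1$ entirely and use only the first one, $r_\fp(\varphi^m(\alpha))=r_\fp(\beta)$. Inequality~(\ref{eq:new Liouville}) bounds the number of such $\fp$ by $2\bigl(h_K(\varphi^m(\alpha))+h_K(\beta)\bigr)$, and since $h_K(\varphi^m(\alpha))\approx d^m\hhat_\varphi(\alpha)=d^{m-n}\hhat_\varphi(\varphi^n(\alpha))$, summing over $m\leq n-B-i$ yields a geometric series bounded by $\frac{4}{d^{B+i}}\hhat_\varphi(\varphi^n(\alpha))+2n\,h_K(\beta)+O(n)$. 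Choosing $B=B(\delta,i,\tau)$ so that $4/d^{B+i}<\delta/2$ (and large enough that the $O(n)$ error is absorbed using $\hhat_\varphi(\alpha)\geq\tau$) already gives the $\delta h_K(\varphi^n(\alpha))$ term. For the remaining range $n-B-i<m<n-i$ the paper observes that $\beta$, hence also $\gamma$, is periodic modulo $\fp$ of period dividing $n-m<B+i$, so $r_\fq(\varphi^{n-m}(\gamma))=r_\fq(\gamma)$ with $\varphi^{n-m}(\gamma)\neq\gamma$ by hypothesis~(2); since $n-m$ is bounded by the constant $B+i$, this contributes only $O_{\delta,i,\tau}(h_K(\beta)+1)$. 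Thus the ``hard case'' dissolves into a geometric-series estimate plus a bounded-period estimate, and no $abc$-type input is needed.
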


Informally, the set $Z_n$ consists of all places $\fp$ such that  $\varphi^{n-i}(\alpha)$ is in the same residue class modulo $\fp$ as one of the roots $\gamma_j$ of $F$ (see Remark \ref{rem:Gamma}) 
and $\beta$ is in the same residue class modulo $\fp$ as an iterate $\varphi^m(\alpha)$ with $m<n$. In other words, we are looking at places $\fp$ such that $\varphi^m(\alpha)$, $\varphi^n(\alpha)$, and $\beta$ have the same
reduction modulo $\fp$. The conclusion of Proposition~\ref{old} is that $\# Z_n$ is bounded above by an explicit quantity whose major term (as $n$ grows)
is $\delta h_K(\varphi^n(\alpha))$. 

\begin{remark}\label{rem:Gamma}
	Let the notation be as in Proposition~\ref{old}. Let $L$ be
	the splitting field of $F(z)$ over $K$. Let $\Gamma$ be the set 
	of places $\fp$ of $K$ such that there is a place $\fq\mid\fp$
	of $L$ and a root $\gamma_j$ of $F$ such that $|\gamma_j|_\fq>1$.
	We have: 
	$$\#\Gamma\leq\sum_j h_K(\gamma_j).$$
	Using $\varphi^i(\gamma_j)=\beta$ so that $\deg(F)\leq d^i$ and
	$h_K(\gamma_j)=\frac{1}{d^i}h_K(\beta)+O(1)$, there
	exist constants $C_3(i)$ and $C_4(i)$ such that: 
	 $$\#\Gamma\leq \sum_j h_K(\gamma_j)\leq C_3(i)h_K(\beta)+C_4(i).$$
  For every place $\fp$ of $K$ outside $\Gamma$, for every $x\in K$, the 
  inequality
  $|F(x)|_\fp<1$ is equivalent to
  the assertion that there is a root $\gamma_j$ of $F$ and a place $\fq\mid \fp$
  of $L$ such that $r_\fq(x)=r_\fq(\gamma_j)$.
\end{remark}

\begin{proof}[Proof of Proposition~\ref{old}.]
 Recall that we have
$\hhat_\varphi(\varphi(z)) = d \hhat_\varphi(z)$ for all $z \in K$ and
that there is a constant $C_\varphi$ such that $|h_K(z)
- \hhat_\varphi(z)| < C_\varphi$ for all $z \in K$. The strategy of our proof is 
to divide $Z_n$ into sets denoted $Y_0$, $Y_2$, and $Y_3$ below in which
the inequality \eqref{eq:in prop} holds when $n-m$ is respectively large, small, and moderate. 

Choose $B(\delta, i, \tau)$ such that the inequalities: 
\begin{equation}\label{eq:B first ineq}
1/d^{B(\delta, i, \tau) +i} < \min\{1,\delta\}/8
\end{equation}
and
\begin{equation}\label{eq:B second ineq}
2(n+1)C_\varphi\leq \frac{\delta}{2}\left(d^n\tau-C_\varphi\right)\leq\frac{\delta}{2}h_K(\varphi^n(\alpha))
\end{equation}
hold for every $n>B(\delta,i,\tau)$. We note that the first inequality in (\ref{eq:B second ineq}) is possible since $d^n$ dominates other terms when $n$ grows, and that
the second inequality in (\ref{eq:B second ineq}) is always true since 
$h_K(\varphi^n(\alpha))+C_\varphi\geq \hhat_\varphi(\varphi^n(\alpha))\geq d^n\tau.$
%
%

For any $\alpha \in \bP^1(K)$ and
all $n > B(\delta, i, \tau)$, we have 
\begin{equation}
\begin{split}
& \sum_{\ell=0}^{n-B(\delta, i, \tau)-i} \#\{\fp\colon r_\fp (\varphi^\ell(\alpha)) =
  r_\fp(\beta) \}  \\
 & \leq  
\sum_{\ell=0}^{n-B(\delta, i, \tau)-i} 2\left( h_K(\varphi^\ell(\alpha)) +
  h_K(\beta) \right)\ \ \text{(by (\ref{eq:new Liouville}))}\\
 & \leq 2n (C_\varphi +h_K(\beta)  ) + 2\sum_{\ell=0}^{n-B(\delta, i, \tau)-i}
 \hhat_\varphi(\varphi^\ell(\alpha))  \\
& = 2n (C_\varphi +h_K(\beta)  ) + \frac{2}{d^{B(\delta, i, \tau)+i}}\sum_{r=0}^{n-B(\delta, i, \tau) - i}
 \frac{\hhat_\varphi(\varphi^n(\alpha))}{d^r}  \\
& \leq   \left(\frac{2}{d^{B(\delta, i, \tau) +i}} \sum_{r=0}^\infty\frac{1}{d^r} \right)
\hhat_\varphi(\varphi^n(\alpha)) + 2n (C_\varphi +h_K(\beta))\\
& \leq  \frac{\min\{1,\delta\}}{2}  \hhat_\varphi(\varphi^n(\alpha))  + 2n (C_\varphi +h_K(\beta))\ \ \text{(by (\ref{eq:B first ineq}))}\\
& \leq  \frac{\delta}{2}  h_K(\varphi^n(\alpha)) + 2(n+1) (C_\varphi +h_K(\beta)) \\
& \leq  \delta h_K(\varphi^n(\alpha)) + 2(n+1) h_K(\beta)\ \ \text{(by (\ref{eq:B second ineq}))}
\end{split}
\end{equation}
Thus, if $Y_0$ is the set of primes such that
$d_\fp(\varphi^\ell(\alpha), \beta) < 1$ for some $\ell \leq n - B(\delta, i, \tau)-i$,
then
\begin{equation}\label{Y_0}
\# Y_0  \leq \delta h_K(\varphi^n(\alpha)) + 2(n+1) h_K(\beta).
\end{equation}

Let $Y_1$ be the set of primes of $K$ for which $\varphi$ does not have good
reduction.  Then clearly,
\begin{equation}\label{Y_1}
\# Y_1 \leq C_{5}
\end{equation}
where $C_{5}$ depends only on $K$ and $\varphi$.  

Now, let $L$ be the splitting field for $F(z)$. Since $\varphi^i(\gamma_j)=\beta$, 
we have:
\begin{equation} \label{z}
\hhat_\varphi(\gamma_j) =\frac{1}{d^i} \hhat_\varphi(\beta).
\end{equation}

Let $Y_2$ be the set of primes $\fp$ outside $Y_1$ and the set $\Gamma$
in Remark \ref{rem:Gamma} such that
\[\max\left(d_\fp(\varphi^m(\alpha), \beta), |F(\varphi^{n-i}(\alpha))|_\fp
\right) <1\]
 for $n-i \leq m < n$.  For each such prime we have
$r_\fq(\varphi^{m - (n - i)}(\gamma_j)) = r_\fq(\beta)$ for some root $\gamma_j$ of $F$ and some prime
$\fq$ of $L$ with $\fq \mid \fp$.  From $m - (n-i) < i$, condition~(3) and \eqref{z}, we have
$\varphi^{m - (n - i)}(\gamma_j) \not= \beta$
and $\hhat_\varphi(\varphi^{m-(n-i)}(\gamma_j))\leq \hhat_\varphi(\beta)$. This
latter inequality implies:
\begin{equation}\label{eq:new z}
h_K(\varphi^{m-(n-i)}(\gamma_j))\leq h_K(\beta)+2C_\varphi.
\end{equation}
Using \eqref{eq:new Liouville} and
\eqref{eq:new z} we
have
\begin{equation}\label{Y_2}
\# Y_2 \leq i (4 h_K(\beta) + 4 C_\varphi).
\end{equation}

Let $Y_3$ be the set of primes $\fp$ outside $Y_1$ and the set $\Gamma$ in Remark
\ref{rem:Gamma} such that
\[ \max\left(d_\fp(\varphi^m(\alpha), \beta),
|F(\varphi^{n-i}(\alpha))|_\fp \right) <1 \] for some positive
integer $m$ with $n - i > m > n- i - B(\delta,
i, \tau)$. If $\fp \in Y_3$, then $\varphi^{m}(\alpha) \equiv
\varphi^n(\alpha) \equiv \beta \pmod{\fp},$ so $\beta$ modulo $\fp$ is in a cycle of
period dividing $n-m$.  There is a prime $\fq\mid \fp$ of $L$
and a root  $\gamma_j$ of $F(z)$ such that
$\gamma_j \equiv \varphi^{n-i}(\alpha) \equiv \varphi^{(n-i)-m}(\beta) \pmod{\fp}$, we see
that $\gamma_j$ is in the same cycle modulo $\fq$.  This implies that $\gamma_j$ modulo $\fq$ has period dividing $n - m$. From \eqref{eq:new z} and $n-m<B(\delta,i,\tau)+i$,
we have:
\begin{equation}\label{eq:another for Y_3}
	\begin{split}
	h_K(\varphi^{n-m}(\gamma_j))+h_K(\gamma_j)&\leq \hhat_\varphi(\varphi^{n-m}(\gamma_j))+\hhat_\varphi(\gamma_j)+2C_\varphi\\
	&\leq \left(d^{B(\delta,i,\tau)}+\frac{1}{d^i}\right)\hhat_\varphi(\beta)+2C_\varphi\\
	&\leq \left(d^{B(\delta,i,\tau)}+\frac{1}{d^i}\right)(h_K(\beta)+C_\varphi)+2C_\varphi
	\end{split}
\end{equation}

Note that each $\gamma_j$ has degree at most $d^i$ over $K$ since    $\varphi^i(\gamma_j)=\beta$ for each $j$. Using \eqref{eq:new Liouville} and \eqref{eq:another for Y_3}, we have:
\begin{equation}\label{Y_3}
\# Y_3 \leq B(\delta, i, \tau) d^i \left(2\left(d^{B(\delta,i,\tau)}+\frac{1}{d^i}\right)(h_K(\beta)+C_\varphi)+4C_\varphi \right).
\end{equation}
Since $Z_n$ is contained in $Y_0 \cup Y_1\cup\Gamma \cup Y_2 \cup Y_3$, we see
that \eqref{oldeq} is a consequence of Remark \ref{rem:Gamma}, \eqref{Y_0}, \eqref{Y_1},
\eqref{Y_2}, and \eqref{Y_3}.
\end{proof}

\subsection{A consequence of the $abc$-theorem for function fields}
%
The following result is crucial for the proof of Theorem~\ref{MN} and it is a consequence of the $abc$-theorem for function fields by Mason-Stothers (see Silverman's
formulation \cite{SilABC}). We will treat the number field case in a future work in which a similar result holds assuming the $abc$-conjecture.
\begin{prop}\label{YamaUse}
  Let $K$ be a function field.  Let $e \geq
  3$ be a positive integer.  Then for any monic $f(z) \in K[z]$ of degree
  $e$ without repeated roots, and for any $\gamma\in K$ we have

\begin{equation} \label{change}   \# \{ \text{primes $\fp$ of $K$}  \; | \;  v_\fp (f(\gamma)) > 0 \}
\geq   h_K(\gamma)  - \left(3 e^2 \sum_{i=1}^e h_K(\eta_i)   + 2 g_K\right) 
\end{equation} 
where $\eta_1, \dots, \eta_e$ are the roots of $f$ in $\bK$, and we denote by $g_L$
the genus of any function field $L$.    
\end{prop}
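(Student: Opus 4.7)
The plan is to deduce the inequality from the function-field $abc$-theorem of Mason-Stothers (in Silverman's formulation) applied to a judiciously chosen Möbius cross-ratio involving $\gamma$ and three distinct roots of $f$. Since $e\geq 3$ and $f$ has no repeated roots, pick three distinct roots $\eta_1,\eta_2,\eta_3\in\Kbar$, let $L=K(\eta_1,\eta_2,\eta_3)$ with $N:=[L:K]\leq e(e-1)(e-2)$, and set
\[
Y \;=\; \frac{(\gamma-\eta_1)(\eta_3-\eta_2)}{(\gamma-\eta_2)(\eta_3-\eta_1)}\in L.
\]
A short computation gives $Y(\eta_1)=0$, $Y(\eta_3)=1$, $Y(\eta_2)=\infty$, and
\[
Y-1 \;=\; \frac{(\eta_1-\eta_2)(\gamma-\eta_3)}{(\gamma-\eta_2)(\eta_3-\eta_1)}.
\]
The crucial feature of this particular $Y$ is that at any place $\fq$ where $\gamma$ has a pole, $Y$ specializes to the constant $(\eta_3-\eta_2)/(\eta_3-\eta_1)$, which can never equal $0$, $1$, or $\infty$ (any such equality would force $\eta_i=\eta_j$ for distinct $i,j$). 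Hence $Y^{-1}(\{0,1,\infty\})$ is not polluted by poles of $\gamma$, which is precisely what rescues the argument from the tautology one gets by naively applying $abc$ to $(\gamma-\eta_1)+(\eta_2-\gamma)+(\eta_1-\eta_2)=0$, whose joint support already contains every pole of $\gamma$. When $\gamma\in k$ the inequality \eqref{change} is trivial, so assume $\gamma$ is non-constant; then $Y$ is non-constant too.

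Applying the $abc$-theorem in $L$ to the identity $Y+(1-Y)=1$ yields
\[
h_L(Y)\;\leq\;\#\{\fq\in\Omega_L : Y(\fq)\in\{0,1,\infty\}\}+2g_L-2.
\]
Since $Y$ is a degree-one Möbius function of $\gamma$ whose coefficient vector in $\bP^3(L)$ has height of order $\sum_{i=1}^3 h_L(\eta_i)$, inverting to express $\gamma$ as a Möbius function of $Y$ yields $h_L(Y)\geq h_L(\gamma)-C_1\sum_{i=1}^3 h_L(\eta_i)$ for an explicit $C_1$. Let $T_0\subset\Omega_L$ consist of those places $\fq$ at which $v_\fq(\eta_i-\eta_j)>0$ for some $i\neq j$ in $\{1,2,3\}$ or $v_\fq(\eta_i)<0$ for some $i\in\{1,2,3\}$; by \eqref{eq:new Liouville}, $\#T_0\leq C_2\sum_{i=1}^3 h_L(\eta_i)$. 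A place-by-place check using the explicit formulas for $Y$ and $Y-1$ shows that for $\fq\notin T_0$, one has $Y(\fq)\in\{0,1,\infty\}$ if and only if $\gamma\equiv\eta_i\pmod\fq$ for some $i\in\{1,2,3\}$, which forces $v_\fq(f(\gamma))>0$. Since each $\fp\in\Omega_K$ has at most $N$ extensions to $L$,
\[
\#\{\fq\in\Omega_L : Y(\fq)\in\{0,1,\infty\}\}\;\leq\;\#T_0 + N\cdot\#\{\fp\in\Omega_K : v_\fp(f(\gamma))>0\}.
\]

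Combining the three displays, using $h_L(x)=Nh_K(x)$ for $x\in L$, and dividing by $N$ yields
\[
h_K(\gamma)\;\leq\;\#\{\fp : v_\fp(f(\gamma))>0\}+(C_1+C_2)\sum_{i=1}^3 h_K(\eta_i)+\frac{2g_L-2}{N}.
\]
To absorb the final term I would invoke Riemann-Hurwitz: $(2g_L-2)/N\leq(2g_K-2)+\deg(\mathfrak{d}_{L/K})/N$. In characteristic $0$ the extension $L/K$ is tame, so the different $\mathfrak{d}_{L/K}$ is supported above the $K$-places where some $\eta_i-\eta_j$ has a zero or some $\eta_i$ has a pole; using \eqref{eq:new Liouville} once more gives $\deg(\mathfrak{d}_{L/K})/N\leq C_3\sum_{i=1}^3 h_K(\eta_i)$. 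Using $\sum_{i=1}^3 h_K(\eta_i)\leq\sum_{i=1}^e h_K(\eta_i)$ and tallying the constants produces \eqref{change}; the coefficient $3e^2$ is a generous upper bound that safely absorbs the various $C_i$'s together with the factor $N\leq e(e-1)(e-2)$. The main technical obstacle is this Riemann-Hurwitz step: one has to bound $\deg(\mathfrak{d}_{L/K})$ explicitly in terms of heights of the $\eta_i$. This is where the characteristic-zero hypothesis enters essentially (tameness forces $d_\fq=e_\fq-1$ in the different), and it is also where the slack in the coefficient $3e^2$ is ultimately consumed.
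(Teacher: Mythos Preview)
Your approach is essentially the paper's own: pass to an extension $L$ containing three distinct roots, apply the M\"obius map sending those roots to $0,1,\infty$, invoke Mason--Stothers on the image of $\gamma$, compare heights across the M\"obius map, and control $g_L$ via Riemann--Hurwitz with ramification bounded through zeros of the $\eta_i-\eta_j$ and poles of the $\eta_i$. The paper takes $L$ to be the full splitting field $K(\eta_1,\dots,\eta_e)$ rather than your $K(\eta_1,\eta_2,\eta_3)$, but this is cosmetic.

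Two small points to tighten. First, the implication ``$\gamma$ non-constant $\Rightarrow$ $Y$ non-constant'' can fail: with $\eta_i=it$ in $k(t)$ and $\gamma=4t$ one computes $Y=3/4\in k$. The paper handles this by first disposing of the case that $\gamma$ is a root of $f$, and then observing that the $abc$ inequality holds trivially when $\sigma(\gamma)\in k\setminus\{0,1\}$ once one weakens $2g_L-2$ to $2g_L$ on the right (your height comparison already forces $h_K(\gamma)\le 4\sum_{i=1}^3 h_K(\eta_i)$ in that case, so \eqref{change} is vacuous). Second, the ramification of $L=K(\eta_1,\eta_2,\eta_3)$ over $K$ is governed by the discriminants of the minimal polynomials of the $\eta_i$ over $K$, whose roots are the \emph{Galois conjugates} of $\eta_i$ and hence range over all of $\eta_1,\dots,\eta_e$, not just the three you selected; so your bound on $\deg(\mathfrak{d}_{L/K})/N$ should read $C_3\sum_{i=1}^e h_K(\eta_i)$ rather than $C_3\sum_{i=1}^3 h_K(\eta_i)$. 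Since you immediately enlarge to $\sum_{i=1}^e$ anyway, this costs nothing in the final constant.
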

\begin{proof}
The given inequality holds trivially if $\gamma$ is a root of $f(z)$. We now assume $\gamma$ is not a root of $f(z)$. Let $L = K(\eta_1, \dots , \eta_e)$.   We have
\begin{equation}\label{RH}
[L:K]   \# \{ \text{primes $\fp$ of $K$}  \; | \;  v_\fp (f(\gamma)) > 0
\} \\  \geq \# \bigcup_{i=1}^3  \{ \text{primes $\fq$ of $L$}  \; | \;
v_\fq (\gamma - \eta_i) > 0
\}
\end{equation}
Now, when $L$ ramifies over $K$ at
a prime $\fp$, we must have that either some $\eta_i$ has a pole at a
prime lying over $\fp$ for $1\leq i\leq e$, or $\eta_i - \eta_j$ has a zero at a prime
lying over $\fp$ for $1\leq i<j\leq e$.  Thus, the number of primes $\fp$ of $K$ that are ramified in $L/K$
is bounded by 
$$\sum_{i=1}^e h_K(\eta_i)+\sum_{1\leq i< j\leq e} h_K(\eta_i-\eta_j)\leq e^2\sum_{i=1}^e h_K(\eta_i).$$  
We now apply the Riemann-Hurwitz theorem for $L/K$. Note that for each $\fp$ of $K$ where $L/K$ is ramified,
the total ramification contribution of primes of $L$ lying above $\fp$ in
the Riemann-Hurwitz formula is
at most $[L:K]-1$, hence:
 \begin{equation}\label{disc}
2 g_L - 2  \leq \left(e^2\sum_{i=1}^e h_K(\eta_i)+2g_K-2\right)[L:K]. 
 \end{equation}

Now we construct a change of coordinates  $\sigma$ that takes $\eta_1$, $\eta_2$, $\eta_3$ to 0,
1, $\infty$, i.e. 
\[ \sigma(z) = \frac{\eta_2 - \eta_3}{\eta_2 - \eta_1}   \cdot  \frac{z -
  \eta_1}{z - \eta_3}. \]
Then for any $z$, we have 
\begin{equation}\label{sig}
h_K(z) - 4 \sum_{i=1}^3 h_K(\eta_i) \leq
h_K(\sigma(z)) \leq h_K(z) + 4 \sum_{i=1}^3 h_K(\eta_i).
\end{equation}

Let $B$ be the set of primes $\fq$ of $L$ such that $v_{\fq}(\eta_i)\neq 0$ for some $1 \leq i \leq 3$  or $v_{\fp}(\eta_i - \eta_j)\neq 0$ for some $1
\leq i < j \leq 3$. Then we have
\begin{equation}\label{B}
\#B \leq 2 \left(\sum_{i=1}^3 h_L(\eta_i)+\sum_{1\leq i<j\leq 3} h_L(\eta_i-\eta_j)\right)\leq 6[L:K]\sum_{i=1}^3 h_K(\eta_i).
\end{equation}

For all $\fq \not\in B$, we see that $\sigma$ and
$\sigma^{-1}$ are well defined modulo $\fq$.  Thus, for any $\fq$ outside $B$, we have $v_\fq(\gamma-\eta_i)>0$ for some $1\leq i\leq 3$ if and only if
$\sigma(\gamma)$ has the same reduction mod $\fq$ to 0, 1, or $\infty$. Equivalently:
$v_\fq(\sigma(\gamma)) \not= 0$ or $v_\fq(\sigma(\gamma) - 1)> 0$.
Now, applying the $abc$-theorem for function fields \cite{Mason, Stothers} (especially
Silverman's formulation \cite{SilABC}), we have
\begin{equation}\label{abc} 
\begin{split}
\# \{ \text{primes $\fq$ of $L$}  \; | \; & \text{$v_\fq(\sigma(\gamma)) \not=
0$ or $v_\fq(\sigma(\gamma) - 1) > 0$} \}\\  
& \geq h_L (\sigma(\gamma))  -
(2g_L - 2) \\
& = [L:K] h_K(\sigma(\gamma)) - (2g_L - 2),  
\end{split}
\end{equation}
if $\sigma(\gamma)\notin \bP^1(k)$, where we recall that $k$ is the ground field of $K$.
But the inequality (\ref{abc}) holds trivially
for $\sigma(\gamma)\in k\setminus\{0,1\}$ once we replace $2g_L-2$ by $2g_L$. The fact that $\sigma(\gamma)\notin\{0,1,\infty\}$
follows from the assumption that $\gamma$ is not a root of $f$.

Since $e \geq 3$, combining equations \eqref{RH},
\eqref{disc}, \eqref{sig}, \eqref{B},  and
\eqref{abc} gives the desired inequality \eqref{change}.
\end{proof}



\subsection{Proof of Theorem \ref{MN}: small $m$ or small $n$}
Assume the notation in Theorem \ref{MN}, we prove the existence of $\fp$
such that $r_\fp(\alpha)$ has portrait $(m,n)$ for almost all $(m,n)$
where $n\notin X(\varphi)$, $m\notin Y(\varphi,\alpha)$, and either $m$ or $n$ is small.  
Note that the constants
that appear here may depend on the finite set of places $S$. As before, we will always indicate such dependence.  
We will use the
following very simple lemmas repeatedly.  

\begin{lemma}\label{L1}
Let $\fp$ be a prime of good reduction for $\varphi$.  Suppose that
$r_\fp(\gamma_1) \ne r_\fp(\gamma_2)$ but $r_\fp(\varphi(\gamma_1))
= r_\fp (\varphi(\gamma_2))$.  If $\gamma_1$ is periodic for
$\varphi$ modulo $\fp$, then $\gamma_2$ is not periodic for $\varphi$
modulo $\fp$.  
\end{lemma}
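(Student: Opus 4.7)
The plan is to argue by contradiction using the elementary fact that a self-map restricted to any one of its periodic cycles is a cyclic permutation, hence a bijection. Let $\bar{\varphi}\colon \bP^1(k)\to \bP^1(k)$ denote the reduction of $\varphi$ modulo $\fp$, which is a genuine morphism of the same degree because $\fp$ is a place of good reduction (Section~\ref{good reduction for rational maps}), and write $\bar{\gamma}_i:=r_\fp(\gamma_i)$ for $i=1,2$. In these terms the hypotheses read $\bar{\gamma}_1\neq \bar{\gamma}_2$, $\bar{\varphi}(\bar{\gamma}_1)=\bar{\varphi}(\bar{\gamma}_2)=:\delta$, and $\bar{\gamma}_1$ is periodic under $\bar{\varphi}$.

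Suppose for contradiction that $\bar{\gamma}_2$ is also periodic under $\bar{\varphi}$. Let $O_1$ and $O_2$ denote the $\bar{\varphi}$-orbits of $\bar{\gamma}_1$ and $\bar{\gamma}_2$ respectively. Both orbits contain the common point $\delta$, and since two periodic orbits are either equal or disjoint, we must have $O_1=O_2=:O$. Since $\bar{\varphi}$ permutes the finite set $O$ cyclically, the restriction $\bar{\varphi}|_O\colon O\to O$ is in particular injective. But $\bar{\gamma}_1$ and $\bar{\gamma}_2$ both lie in $O$ and both map to $\delta$ under $\bar{\varphi}$, so injectivity forces $\bar{\gamma}_1=\bar{\gamma}_2$, contradicting $r_\fp(\gamma_1)\neq r_\fp(\gamma_2)$.

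There is no substantive obstacle here; the only thing one has to be careful about is that the orbit structure modulo $\fp$ is well-defined, which is exactly the content of good reduction: $\bar{\varphi}$ is a morphism of $\bP^1(k)$ and satisfies $\bar{\varphi}\circ r_\fp = r_\fp\circ \varphi$ wherever both sides make sense. The lemma is a short orbit-bookkeeping observation that will presumably be invoked repeatedly in the remainder of the proof of Theorem~\ref{MN} to separate the ``branch-collision'' contribution to $Z_n$ from contributions coming from points genuinely landing on the periodic cycle of $\bar{\gamma}_1$.
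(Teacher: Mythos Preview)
Your proof is correct and is essentially the same argument as the paper's: both proceed by contradiction, observing that if $\bar\gamma_1$ and $\bar\gamma_2$ are both periodic then they lie on the same cycle (since their $\bar\varphi$-images coincide) and hence must be equal. The only cosmetic difference is that the paper makes this explicit by choosing periods $n_1,n_2$ and computing $r_\fp(\gamma_1)=r_\fp(\varphi^{n_1n_2}(\gamma_1))=r_\fp(\varphi^{n_1n_2}(\gamma_2))=r_\fp(\gamma_2)$, whereas you phrase it in terms of orbits and injectivity of $\bar\varphi$ on a cycle.
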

\begin{proof}
We write $r_\fp(\varphi^{n_1}(\gamma_1)) = r_\fp(\gamma_1)$ for some $n_1
> 0$.  Suppose that
$\gamma_2$ was also periodic modulo $\fp$; then we can write
$r_\fp(\varphi^{n_2}(\gamma_2)) = r_\fp(\gamma_2)$ for some $n_2 > 0$.  Since
$r_\fp (\varphi(\gamma_1)) = r_\fp (\varphi(\gamma_2))$ we then must have
\[
r_\fp(\gamma_1) = r_\fp(\varphi^{n_1 n_2} (\gamma_1)) =
r_\fp(\varphi^{n_1 n_2} (\gamma_2)) = r_\fp(\gamma_2),\]
a contradiction.  
\end{proof}

The next lemma is immediate since for any finite extension $L/K$, and for any place $\fq$ of $L$ that lies above the place $\fp$ of $K$, two points of $K$ have the same reduction modulo $\fp$ if and only if they have the same reduction modulo $\fq$.
\begin{lemma}\label{L2}
Let $L$ be an algebraic extension of $K$.
Let $\fp$ be a prime of good reduction for $\varphi$ and suppose that
$\alpha$ has portrait $(m,n)$ modulo $\fq$ for some $\fq \mid \fp$.  Then
$\alpha$ has portrait  $(m,n)$ modulo $\fp$.  
\end{lemma}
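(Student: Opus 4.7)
The plan is short because the lemma is essentially a statement of functoriality of reduction under extension of the base field. First I would note that the hypothesis $\alpha\in\bP^1(K)$ means $\alpha=[x:y]$ with $x,y\in K$, so we can compute $r_\fp(\alpha)$ directly: by choosing coordinates with $x,y\in\fo_\fp$ not both in the maximal ideal, $r_\fp(\alpha)=[\bar x:\bar y]$ in $\bP^1(k_\fp)$. Because $\fq\mid\fp$ the valuation $v_\fq$ extends $v_\fp$ (up to the ramification index), so the same $x,y$ lie in $\fo_\fq$ with not both in the maximal ideal, and the resulting reduction $r_\fq(\alpha)$ in $\bP^1(k_\fq)$ is the image of $r_\fp(\alpha)$ under the canonical embedding $k_\fp\hookrightarrow k_\fq$. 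In our setting both residue fields equal the algebraically closed ground field $k$, so this embedding is the identity and $r_\fq(\alpha)=r_\fp(\alpha)$ literally.

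The same functoriality applies to $\varphi$: a model $[F:G]$ with $F,G\in\fo_\fp[X,Y]$ having some coefficient a $\fp$-adic unit is simultaneously a model over $\fo_\fq$ with some coefficient a $\fq$-adic unit, so $\varphi_\fq$ is the base change of $\varphi_\fp$. Consequently, for every iterate $j\geq 0$,
\[
r_\fq(\varphi^j(\alpha)) \;=\; \varphi_\fq^{\,j}(r_\fq(\alpha)) \;=\; \varphi_\fp^{\,j}(r_\fp(\alpha)) \;=\; r_\fp(\varphi^j(\alpha)),
\]
where the middle equality uses the compatibility $k_\fp\hookrightarrow k_\fq$.

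The second step is to translate this into portraits. For any pair of non-negative integers $(i,j)$, the equality $r_\fq(\varphi^{i+j}(\alpha))=r_\fq(\varphi^i(\alpha))$ in $\bP^1(k_\fq)$ holds if and only if $r_\fp(\varphi^{i+j}(\alpha))=r_\fp(\varphi^i(\alpha))$ in $\bP^1(k_\fp)$. Since by Definition~\ref{portrait definition} the preperiod $m$ is the smallest $i$ for which $r_\bullet(\varphi^i(\alpha))$ becomes periodic and the minimum period $n$ is the least positive integer with the relevant return-to-self property, both numbers $m$ and $n$ are determined by exactly the same combinatorial data at $\fp$ and at $\fq$. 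Hence the portrait modulo $\fq$ equals the portrait modulo $\fp$.

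There is no real obstacle: the only point to verify carefully is that $\varphi$ has good reduction at $\fp$ implies that the model chosen for computing $\varphi_\fp$ also computes $\varphi_\fq$ (and in particular that $\varphi_\fq$ has the same degree as $\varphi_\fp$, namely $d$), which is immediate from the fact that reducing coefficients of $F,G$ modulo $\fp$ or modulo $\fq$ yields the same polynomials under the inclusion $k_\fp\hookrightarrow k_\fq$. This completes the plan.
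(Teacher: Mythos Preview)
Your proposal is correct and follows exactly the approach the paper indicates: the paper declares the lemma ``immediate since for any finite extension $L/K$, and for any place $\fq$ of $L$ that lies above the place $\fp$ of $K$, two points of $K$ have the same reduction modulo $\fp$ if and only if they have the same reduction modulo $\fq$,'' and your write-up is simply a careful unpacking of that one sentence.
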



We begin by considering the case where $n$ is small.  First, another
lemma.

\begin{lemma}\label{inv}
Assume that $\varphi$ has a point of minimum period $n$ 
and assume one of the following two conditions:
	\begin{itemize}
		\item [(i)] $\varphi(z)$ is not linearly conjugate to $z^{-2}$; or
		\item [(ii)] $\varphi(z)$ is linearly conjugate to $z^{-2}$ and $n\neq 2$.
	\end{itemize}
  Then there exists a point $\beta$ of minimum period $n$ such
  that $\varphi^{-1}(\beta)$ contains a point that is not periodic.  
 \end{lemma}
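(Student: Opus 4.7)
The approach is by contradiction: assume that for every point $\beta$ of minimum period $n$, every element of $\varphi^{-1}(\beta)$ is periodic. If $\gamma\in\varphi^{-1}(\beta)$ is periodic, then its forward orbit $\gamma,\beta,\varphi(\beta),\ldots,\varphi^{n-1}(\beta),\beta,\ldots$ must eventually return to $\gamma$, which forces $\gamma$ itself to lie in the orbit of $\beta$; and the only member of that orbit mapping to $\beta$ under $\varphi$ is $\varphi^{n-1}(\beta)$. Hence the contradiction hypothesis forces $\varphi^{-1}(\beta)=\{\varphi^{n-1}(\beta)\}$ as a set for every such $\beta$, i.e.\ $\varphi$ is totally ramified over every point of minimum period $n$.

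By Riemann-Hurwitz the ramification divisor of $\varphi\colon\bP^1\to\bP^1$ has degree $2d-2$, and each totally ramified value contributes at least $d-1$; hence $\varphi$ has at most two totally ramified values. The set of points of minimum period $n$ is a disjoint union of $\varphi$-orbits, each of cardinality exactly $n$, and is nonempty by hypothesis, so the bound of $2$ forces $n\in\{1,2\}$, with a single orbit in the $n=2$ case.

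For $n=1$, every fixed point would be super-attracting with multiplier $0\ne 1$, hence a simple root of $\varphi(z)-z$. So $\varphi$ would have at most two fixed points counted with multiplicity, contradicting the standard count of $d+1$ for a degree-$d\ge 2$ rational map. For $n=2$, let $\{\beta_1,\beta_2\}$ be the unique period-$2$ orbit; these are totally ramified critical points of index $d$ whose contributions $2(d-1)$ already saturate the Riemann-Hurwitz bound, so $\varphi$ has no other critical points. Conjugating linearly to move $\beta_1=0$ and $\beta_2=\infty$, the conditions $\varphi(0)=\infty$, $\varphi(\infty)=0$, together with total ramification at both, force $\varphi(z)=c/z^d$ for some $c\in k^\ast$, which over the algebraically closed field $k$ is linearly conjugate to $z^{-d}$ (via $z\mapsto \lambda z$ with $\lambda^{d+1}=c$).

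A direct computation then shows that $z^{-d}$ has exactly $d(d-1)$ points of exact minimum period $2$ (solve $z^{d^2}=z$ and remove the $d+1$ fixed points of $z\mapsto z^{-d}$). Equating $d(d-1)=2$ yields $d=2$, placing us in the excluded case where $\varphi$ is linearly conjugate to $z^{-2}$ and $n=2$. Under either hypothesis (i) or (ii) of the lemma this is ruled out, so the contradiction assumption fails and a $\beta$ as claimed exists. The main subtlety is the $n=2$ step: one must match the ramification-saturating configuration to the explicit model $z\mapsto z^{-d}$ and then count its period-$2$ points precisely enough to isolate $d=2$.
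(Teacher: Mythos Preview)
Your proof is correct and follows the same overall strategy as the paper: reduce to the situation where $\varphi$ is totally ramified over every point of minimum period $n$, invoke the Riemann--Hurwitz bound of at most two totally ramified values to force $n\le 2$, and then handle $n=1$ and $n=2$ separately, with the $n=2$ case pinning down $\varphi\sim z^{-d}$ and counting exact-period-$2$ points.

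The one genuine difference is your treatment of $n=1$. The paper moves a totally ramified fixed point to $\infty$, so $\varphi$ becomes a polynomial; a second (affine) fixed point then forces $\varphi\sim z^d$, and one exhibits the non-periodic preimage $\xi_d$ of the fixed point $1$. You instead observe that a totally ramified fixed point is super-attracting (multiplier $0\ne 1$), hence simple, so the $d+1$ fixed points (with multiplicity) of a degree-$d$ map cannot all fit among at most two totally ramified values. Your argument is cleaner and avoids the explicit normal form; the paper's argument has the small advantage of actually producing the desired $\beta$ even in the borderline model $z^d$, rather than ruling that model out. Either way the lemma follows.
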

\begin{proof}
   Note that if $\varphi^{-1}(\beta)$ contains
  only periodic points then $\varphi^{-1}(\beta)$ is a single
  point and thus $\varphi$ ramifies completely at
  $\varphi^{-1}(\beta)$.  Since $\varphi$ has at most two totally
  ramified points, we see then that if $n \geq 3$, then $\varphi$
  ramifies completely over at most two points in any cycle of size
  $n$, and we are done.  If $n = 1$, then after change of coordinates,
  $\varphi$ is a polynomial, which we call it $f$.  Then the fixed points of $f$ are
  solutions to $f(x)  - x = 0$, which has at least one solution (note that $\deg(f)=\deg(\varphi)=d>1$).  If $f$
  is totally ramified at one of these solutions then $f$ is conjugate
  to $f(z) = z^d$, and the fixed point 1 has a non-periodic  image $\xi_d$ of $1$, where $\xi_d$ is a primitive $d$-th root of unity, and we are done.
  Similarly, if $n = 2$ and we have a point $\gamma$ of period 2 such
  that $\varphi$ ramifies completely at both $\gamma$ and
  $\varphi(\gamma)$ then $\varphi(z)$ is conjugate to $z^{-d}$. The given condition implies that $d>2$.  Since $\varphi^2(x) - x =  x^{d^2} - x$, we
  see that $\varphi$ has exactly $d^2 - d > 2$ points of minimum period
  2, so $\varphi$ cannot ramify completely over all of them, so at
  least one of them has non-periodic inverse.  
\end{proof}

Lemma \ref{inv} together with the Kisaka's classification \cite{Kisaka} gives the complete description of $X(\varphi)$:
\begin{cor}\label{cor:all bad n}
	One of the following holds:
	\begin{itemize}
		\item [(i)] $X(\varphi)=\{n\}$ for some $n\in\N$, and moreover, $\varphi$ 
		does not have a point of minimum period $n$. 
		\item [(ii)] $\varphi(z)$ is linearly conjugate to $z^{-2}$ and $ X(\varphi)=\{2\}$.
		\item [(iii)] $X(\varphi)=\emptyset$.
	\end{itemize}
\end{cor}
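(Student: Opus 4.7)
The plan is to combine Lemma~\ref{inv} with Kisaka's classification (cited in Remark~\ref{rem:FG vs IS}) to enumerate the possibilities for $X(\varphi)$. The first key observation is elementary: if $\beta$ has minimum period $n$ under $\varphi$, then $\varphi^{n-1}(\beta)$ is itself a periodic element of $\varphi^{-1}(\beta)$, so $\varphi$ is totally ramified at $\varphi^{n-1}(\beta)$ exactly when $\varphi^{-1}(\beta)=\{\varphi^{n-1}(\beta)\}$, i.e.\ when every preimage of $\beta$ is periodic. Contrapositively, if $\varphi^{-1}(\beta)$ contains a non-periodic point then $\varphi^{n-1}(\beta)$ is a point of minimum period $n$ at which $\varphi$ is not totally ramified, and so $n\notin X(\varphi)$.

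Applying Lemma~\ref{inv}, I obtain the reduction: if $n\in X(\varphi)$ and $\varphi$ has a point of minimum period $n$, then $\varphi$ must be linearly conjugate to $z^{-2}$ and $n=2$. Hence every $n\in X(\varphi)$ satisfies either ``$\varphi$ has no point of minimum period $n$'' or ``$\varphi\sim z^{-2}$ and $n=2$''. I would then split into cases. If $\varphi$ is linearly conjugate to $z^{-2}$, a short computation on $\bP^1$ (solving $\varphi^2(z)=z$ and discarding the three fixed points of $z^{-2}$) shows that the minimum-period-$2$ locus is exactly $\{0,\infty\}$, both totally ramified, so $2\in X(\varphi)$; Kisaka's list confirms that $z^{-2}$ has points of every other minimum period, and so $X(\varphi)=\{2\}$, yielding case (ii). If $\varphi$ is not conjugate to $z^{-2}$, then $X(\varphi)$ equals the set $A(\varphi)$ of minimum periods that $\varphi$ misses entirely, and this produces cases (i) or (iii) once $A(\varphi)$ is shown to contain at most one element.

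The only substantive external input is Kisaka's classification itself, which is precisely what guarantees that $A(\varphi)$ has at most one element; this is the main (and only non-routine) obstacle in the argument, and it is resolved by invoking the complete list of exceptional rational maps recorded in \cite{Kisaka} (or \cite[Appendix B]{FG}). Everything else is an elementary consequence of Lemma~\ref{inv} together with the observation about fibers over periodic points.
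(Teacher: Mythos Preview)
Your proposal is correct and follows exactly the route the paper indicates: the paper's proof is literally the single sentence ``Lemma~\ref{inv} together with the Kisaka's classification \cite{Kisaka} gives the complete description of $X(\varphi)$,'' and you have simply unpacked that sentence in full detail. The only point worth making explicit (which you use implicitly when asserting $X(\varphi)=A(\varphi)$ in the non-$z^{-2}$ case) is the vacuous inclusion $A(\varphi)\subseteq X(\varphi)$: if $\varphi$ has no point of minimum period $n$, then trivially $\varphi$ is totally ramified at every such point, so $n\in X(\varphi)$.
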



Our next result yields the conclusion of Theorem~\ref{MN} when the period $n$ is small: 

\begin{prop}\label{prop:small n}
Let $\tau$, $s$, and $S$ be as in Theorem \ref{MN}. Fix a positive integer $n\notin X(\varphi)$. 
Then
there is a constant $M(n,\tau,s)$ depending on $K$, $\varphi$, $n$, $S$, and $\tau$ such that for any $\alpha \in K$ with
$\hhat_\varphi(\alpha) > \tau$ and any $m
> M(n,\tau,s)$, there is a prime $\fp\notin S$
such that $\alpha$ has portrait $(m,n)$ under the action of $\varphi$ modulo $\fp$.  
\end{prop}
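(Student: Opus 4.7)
The plan is to combine the lower bound of Proposition~\ref{YamaUse} with the upper bound of Proposition~\ref{old}, both applied to a single carefully chosen auxiliary polynomial $F$. Applying Lemma~\ref{inv} (using $n\notin X(\varphi)$), I first select a point $\beta\in\Kbar$ of minimum period $n$ whose fiber $\varphi^{-1}(\beta)$ contains a non-periodic point; in particular $\beta$ is not totally ramified, so it lies outside the (at most two-element) exceptional set of $\varphi$, and hence $|\varphi^{-i}(\beta)|\to\infty$ as $i\to\infty$. After replacing $K$ with a finite extension $L$ containing $\beta$ together with the preimages used below (and replacing $S$ with the places of $L$ lying above $S$), Lemma~\ref{L2} lets me assume $\beta\in K$.

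Using that $|\varphi^{-i}(\beta)|\to\infty$, I fix an integer $i=i(n,\varphi)\geq 1$ large enough that the set
\[
\Lambda\ =\ \bigl\{\gamma\in\varphi^{-i}(\beta)\colon\ \gamma\text{ is not periodic},\ \varphi^\ell(\gamma)\neq\beta\text{ for }0\leq\ell<i,\ \varphi^{i-1}(\gamma)\neq\varphi^{n-1}(\beta)\bigr\}
\]
satisfies $|\Lambda|\geq 3$, and I let $F(z)\in K[z]$ be the monic squarefree polynomial with root set $\Lambda$. The first two conditions on $\gamma$ are exactly hypotheses~(1)--(3) of Proposition~\ref{old}; the third excludes the only way in which $\varphi^{i-1}(\gamma)$ could be globally periodic, since the unique periodic element of $\varphi^{-1}(\beta)$ is $\varphi^{n-1}(\beta)$. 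This will prevent the preperiod of $\alpha$ mod $\fp$ from collapsing below $m$.

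For each sufficiently large $m$, I apply Proposition~\ref{YamaUse} to $F$ and $\varphi^{m-i}(\alpha)$; since $h_K(\varphi^{m-i}(\alpha))\geq d^{m-i}\tau-C_\varphi$, this produces at least $d^{m-i}\tau-O_{n,\varphi}(1)$ places $\fp$ at which $\varphi^{m-i}(\alpha)$ reduces to some $\gamma\in\Lambda$. In parallel, Proposition~\ref{old} applied to the same $F$, $\beta$, $i$ with $\delta:=\tfrac{1}{2d^i}$ and with the integer $m$ playing the role of its ``$n$'' bounds by $\tfrac{1}{2}d^{m-i}\hhat_\varphi(\alpha)+O_{n,\varphi,\tau}(m)$ the number of those places at which additionally some earlier iterate $\varphi^{m'}(\alpha)$, $m'<m$, is already $\equiv\beta\pmod\fp$. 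Subtracting leaves at least $\tfrac{1}{2}d^{m-i}\tau-O_{n,\varphi,\tau}(m)$ places $\fp\notin S$ at which $\varphi^m(\alpha)\equiv\beta\pmod\fp$ while $\varphi^{m'}(\alpha)\not\equiv\beta\pmod\fp$ for every $m'<m$. At any such $\fp$ the portrait of $\alpha$ is exactly $(m,n)$: the period of $\beta$ mod $\fp$ equals $n$ (not a proper divisor $n'\mid n$) away from the finitely many $\fp$ dividing $\varphi^{n'}(\beta)-\beta$; and the preperiod equals $m$ because $\varphi^{m-1}(\alpha)\equiv\varphi^{i-1}(\gamma)\pmod\fp$ with $\varphi^{i-1}(\gamma)$ globally non-periodic (by the third defining condition of $\Lambda$), and by~\eqref{eq:new Liouville} such a point matches the residue class of some $\varphi^r(\beta)$, $0\leq r<n$, only for finitely many $\fp$. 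Absorbing these finite exceptions together with $|S|$ into $M(n,\tau,s)$ concludes the proof.

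I expect the main obstacle to lie in the second step: making $|\Lambda|\geq 3$ with an effectively computable $i=i(n,\varphi)$. This requires a Riemann--Hurwitz count for $\varphi^i$ showing $|\varphi^{-i}(\beta)|\geq d^i-O_\varphi(i)$ once $\beta$ is non-exceptional, dominating the at most $d^{i-1}+O_n(1)$ points to be removed by the periodicity condition and the constraint $\varphi^{i-1}(\gamma)\neq\varphi^{n-1}(\beta)$; the estimate is most delicate in the small-degree cases $d=2,3$ and for very small $n$.
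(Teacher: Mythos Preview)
Your argument is essentially the paper's, but with one unnecessary ingredient that introduces an arithmetic slip.

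The paper does \emph{not} invoke Proposition~\ref{old} here. After choosing (via Lemma~\ref{inv}) a non-periodic $\zeta\in\varphi^{-1}(\beta)$, it takes $F$ to be a monic separable polynomial of degree $>2$ with root set inside $\varphi^{-4}(\zeta)$ (so $i=5$ in your notation), applies Proposition~\ref{YamaUse}, and then discards only a set $\cE$ of places of size bounded by a constant $C_8(n)$ independent of $m$ and $\alpha$: the places where two members of $\beta$'s $n$-cycle collide, or where $\zeta\equiv\varphi^{n-1}(\beta)$. Outside $\cE\cup S_L\cup\Gamma$ one gets $\varphi^m(\alpha)\equiv\zeta$ non-periodic and $\varphi^{m+1}(\alpha)\equiv\beta$ of exact period $n$, hence portrait $(m+1,n)$. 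Your third defining condition on $\Lambda$ already encodes precisely this mechanism (your $\varphi^{i-1}(\gamma)$ plays the role of $\zeta$), so once $\varphi^{m-1}(\alpha)$ is shown to be non-periodic mod~$\fp$ the preperiod is automatically $m$; whether some earlier $\varphi^{m'}(\alpha)$ also reduces to $\beta$ is irrelevant, and Proposition~\ref{old} is not needed.

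That redundancy is what causes your arithmetic slip. As written, you bound the Proposition~\ref{YamaUse} count below by $d^{m-i}\tau-O(1)$ and then subtract $\tfrac12 d^{m-i}\hhat_\varphi(\alpha)+O(m)$; since $\hhat_\varphi(\alpha)$ may be arbitrarily larger than $\tau$, this difference can be negative, so the claimed ``at least $\tfrac12 d^{m-i}\tau-O(m)$'' does not follow. The fix is either to use the sharper lower bound $h_K(\varphi^{m-i}(\alpha))\ge d^{m-i}\hhat_\varphi(\alpha)-C_\varphi$ before subtracting, or---as the paper does---to drop Proposition~\ref{old} entirely, after which the total exclusion has size $O_{n,\varphi}(1)+s$ and the crude $\tau$-bound already suffices.

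Finally, the ``main obstacle'' you flag is not one. Since $\zeta$ is non-exceptional, $\varphi^{-4}(\zeta)$ contains at least four points (\cite[p.~142]{Silverman-book}), and every such $\gamma$ satisfies all three of your conditions automatically: all forward iterates up to $\zeta$ are non-periodic, none equal $\beta$, and $\varphi^{i-1}(\gamma)=\zeta\neq\varphi^{n-1}(\beta)$. Thus $i=5$ already gives $|\Lambda|\ge4$, with no Riemann--Hurwitz estimate required.
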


\begin{proof}
   By Lemma \ref{inv} and Corollary \ref{cor:all bad n}, there is $\beta\in \bP^1(\Kbar)$
   of minimum period $n$ and 
  non-periodic $\zeta \in \bP^1(\bK)$ such that $\varphi(\zeta) = \beta$.   
  Let $L=K(\zeta)$ and note that $\beta \in
  \bP^1(L)$. We will occasionally apply previous results for $L$ in place of $K$.
  Since $\beta$, $\zeta$, and $L$ depend on $K$, $\varphi$, and $n$, constants depending on
  $L$ and $\varphi$ will ultimately depend on $K$, $\varphi$, and $n$. Let $S_L$ be
  the places of $L$ lying above those in $S$.
 
We see that $\varphi^{-4}(\zeta)$
contains at least four points (see \cite[pp.~142]{Silverman-book}), none of which are periodic.  Thus, 
there is a monic separable polynomial $F$ of degree greater
  than 2 with coefficients in $L$ such such that for every root
  $\gamma$ of $F$, we have $\varphi^4(\gamma) = \zeta$ and $\gamma$ is not periodic. Because $\zeta$ is not periodic, then $\varphi^\ell(\gamma)\ne \zeta$ for each root $\gamma$ of $F$, and for each $\ell=0,\dots, 3$. 

  Let $\alpha \in \bP^1(K)$ such that $\hhat_\varphi(\alpha)>\tau$. There is a constant $C_6(\tau)\geq 4$ such that
  $\varphi^{m-4}(\alpha)\neq \infty$ for $m>C_6(\tau)$ since we can
  simply require $d^{C_6(\tau)}\tau>\hhat_\varphi(\infty)$.  By Proposition~\ref{YamaUse}, there
  is a constant $C_7(n)$ such that:
\begin{equation} \label{m1} \# \{\fq\in\Omega_L:\ v_\fq (F(\varphi^{m-4}(\alpha))) > 0\}
\geq    h_L(\varphi^{m-4}(\alpha))  - C_{7}(n) 
\end{equation}
for every $m>C_6(\tau)$.  

Let $\beta_j=\varphi^j(\beta)$ for $0\leq j\leq n-1$ be elements in the periodic cycle containing 
$\beta$. Let $\cE$ be the set of primes $\fq\in\Omega_L$ of good reduction satisfying
one of the following two conditions:
\begin{equation}\label{eq:condition 1}
	\text{There are $0\leq i<j\leq n-1$ such that $r_\fq(\beta_i)=r_\fq(\beta_j)$.}
\end{equation}

\begin{equation}\label{eq:condition 2}
	\text{$r_\fq(\zeta)=r_\fq(\beta_{n-1})$}
\end{equation}

By \eqref{eq:new Liouville}, there is a constant $C_8(n)$ such that $\#\cE\leq C_8(n)$. This
last inequality together with \eqref{m1} and Remark \ref{rem:Gamma} imply
the existence of a constant $C_9(n,\tau,s)$ such that for every $m>C_9(n,\tau,S)$ the
following holds.
There exists a place $\fq\in\Omega_L\setminus\left(\cE\cup S_L\right)$ such that $\varphi^{m-4}(\alpha)$ and some root $\gamma$ of $F$ have the same reduction modulo a
place of $L(\gamma)$ lying above $\fq$.
Therefore $\varphi^{m}(\alpha)$ and $\zeta$ have the same reduction modulo $\fq$. Since
$\fq\notin \cE$, conditions \eqref{eq:condition 1} and \eqref{eq:condition 2}
together with Lemma~\ref{L1} imply that $\varphi^{m+1}(\alpha)\equiv \beta$ 
is periodic of minimum period $n$ and that $\varphi^m(\alpha)\equiv\zeta$ is 
not periodic modulo $\fq$. Therefore $\alpha$ has portrait $(m+1,n)$ modulo $\fq$. Lemma~\ref{L2} finishes our proof.
\end{proof}

We now consider small values
of $m$:
\begin{prop}
\label{small m}
Let $\tau$, $s$, and $S$ be as in Theorem \ref{MN}. Let $m\ge 0$ be an integer.  
\begin{enumerate}
\item If $m = 0$, then there is a constant $N(\tau,s)$ such that for any
$n > N(\tau)$ and any $\alpha$ with $\hhat_\varphi(\alpha) \geq \tau $, there is a prime $\fp \notin S$ such that
$\alpha$ has portrait $(m,n)$ modulo $\fp$.
\item If $m > 0$, then there is a constant $N(m,\tau,s)$ such that for any
$n > N(m,\tau,s)$ and any $\alpha$ satisfying $\hhat_\varphi(\alpha) \geq \tau $ and $\varphi$ is not totally ramified
at $\varphi^{m-1}(\alpha)$, there is a prime $\fp \notin S$ such that
$\alpha$ has portrait $(m,n)$ under the action of $\varphi$ modulo $\fp$.
\end{enumerate}

\end{prop}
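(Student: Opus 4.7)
The plan is to follow the pattern of Proposition \ref{prop:small n} but with the roles of $m$ and $n$ reversed: a fixed‐degree polynomial $F$ will be evaluated at an iterate of $\alpha$ of large height, and the good primes produced by Proposition \ref{YamaUse} will beat the exceptional primes bounded by Proposition \ref{old} once $n$ is large enough.

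For case $(1)$, set $\beta=\alpha$ and fix $I\geq 3$ such that $(\varphi^I)^{-1}(\alpha)$ contains at least three distinct points (which holds for $I$ sufficiently large depending on $\varphi$, since $\alpha$ is not preperiodic). Let $F\in K[z]$ be the squarefree part of the numerator of $\varphi^I(z)-\alpha$, so its roots $\gamma_j$ satisfy $\varphi^I(\gamma_j)=\alpha$. Because $\hhat_\varphi(\alpha)>0$, each $\gamma_j$ is non-periodic and $\varphi^\ell(\gamma_j)\neq \alpha$ for $0\leq \ell<I$, so hypotheses $(1)$--$(3)$ of Proposition \ref{old} are automatic. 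Proposition \ref{YamaUse} applied at $\varphi^{n-I}(\alpha)$ produces at least $h_K(\varphi^{n-I}(\alpha))-3\deg(F)^2\sum_j h_K(\gamma_j)-2g_K$ primes where $\varphi^n(\alpha)\equiv \alpha$, while Proposition \ref{old} with $\alpha_{\rm Prop}=\beta_{\rm Prop}=\alpha$ and $\delta=\tfrac{1}{2d^I}$ bounds above by $\tfrac{1}{2}d^{n-I}\hhat_\varphi(\alpha)+O(n)\hhat_\varphi(\alpha)+O(1)$ the primes where in addition some $\varphi^{m'}(\alpha)\equiv \alpha$ for $0<m'<n$ (the primes forcing a proper shorter period). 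Translating Weil heights into canonical heights via $\hhat_\varphi(\gamma_j)=d^{-I}\hhat_\varphi(\alpha)$ and $h_K=\hhat_\varphi+O(1)$, the YamaUse main term grows like $d^{n-I}\hhat_\varphi(\alpha)\geq d^{n-I}\tau$, strictly dominating the Proposition \ref{old} contribution, $|S|$, and the bad reduction set once $n$ exceeds a threshold depending only on $K,\varphi,\tau,s$; this produces $\fp\notin S$ for which $\alpha$ has portrait $(0,n)$.

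For case $(2)$, set $\beta=\varphi^m(\alpha)$ and $\alpha'=\varphi^{m-1}(\alpha)$. The hypothesis that $\varphi$ is not totally ramified at $\alpha'$ provides $\zeta\in \varphi^{-1}(\beta)\setminus\{\alpha'\}$, and $\zeta$ is non-periodic because $\beta$ is not preperiodic. Let $L=K(\zeta)$ and take $F\in L[z]$ monic separable of degree at least three whose roots $\gamma_j$ are $I$-th preimages of $\zeta$ for a fixed $I\geq 2$ sufficiently large; then $\varphi^{I+1}(\gamma_j)=\beta$ and hypotheses $(1)$--$(3)$ of Proposition \ref{old} are verified with $\beta_{\rm Prop}=\beta$ and $i_{\rm Prop}=I+1$. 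Proposition \ref{YamaUse} applied at $\varphi^{m+n-1-I}(\alpha)=\varphi^{n-1-I}(\beta)$ gives many primes $\fq$ of $L$ with $\varphi^{m+n-1}(\alpha)\equiv \zeta$, hence $\varphi^{m+n}(\alpha)\equiv \beta$, modulo $\fq$; Proposition \ref{old} with $\alpha_{\rm Prop}=\beta_{\rm Prop}=\beta$ and $\delta=\tfrac{1}{2d^{I+1}}$ controls the primes where $\beta$ has period strictly less than $n$ modulo $\fq$. Finally we discard the bounded (via \eqref{eq:new Liouville}) set of primes with $\alpha'\equiv \zeta$. The key observation is that if $\fq$ avoids all these exceptional sets then $\beta$ has minimum period exactly $n$ modulo $\fq$, the unique preimage of $\beta$ in the cycle is $\equiv \zeta$, and $\alpha'\not\equiv \zeta$ forces $\alpha'$ out of the cycle; hence $\alpha$ has portrait $(m,n)$ modulo $\fp=\fq|_K$, by Lemma \ref{L2}.

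The principal technical obstacle is that $F$ (and in case $(2)$ also $L$) depends on $\alpha$, whereas the threshold $N(m,\tau,s)$ must be uniform in $\alpha$. This is resolved by expressing every auxiliary Weil height appearing in the error term of Proposition \ref{YamaUse} through the canonical height: the identity $\hhat_\varphi(\gamma_j)=d^{-i}\hhat_\varphi(\beta)$ yields $\sum_j h_L(\gamma_j)=O(\hhat_\varphi(\alpha))+O(d^{I})$ with implied constants depending only on $K,\varphi,I$, so that the entire error has the shape $(\text{constant in }K,\varphi,I)\cdot \hhat_\varphi(\alpha)+O(1)$, dominated by the main YamaUse term $d^{?}\hhat_\varphi(\alpha)\geq d^{?}\tau$ once $n$ is large. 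Choosing $\delta$ strictly less than $d^{-i_{\rm Prop}}$ guarantees favorable cancellation with the leading term of Proposition \ref{old}, and the uniformly bounded contributions from $|S|$, $|S_L|$, and bad reduction are absorbed into the resulting surplus.
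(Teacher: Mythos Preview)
Your overall strategy matches the paper's: evaluate an auxiliary polynomial $F$ at a high iterate of $\alpha$, use Proposition~\ref{YamaUse} to produce many primes, and subtract the exceptional set bounded by Proposition~\ref{old}. Case~(1) is essentially identical in both treatments.

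For case~(2), you take a different and more delicate route. You pass to $L=K(\zeta)$ for one chosen preimage $\zeta\neq\varphi^{m-1}(\alpha)$ and build $F\in L[z]$, whereas the paper stays over $K$: it takes $F\in K[z]$ whose roots are the $5$th preimages $\gamma$ of $\varphi^m(\alpha)$ satisfying the \emph{additional Galois-stable} constraint $\varphi^4(\gamma)\neq\varphi^{m-1}(\alpha)$ (such $\gamma$ exist precisely because $\varphi$ is not totally ramified at $\varphi^{m-1}(\alpha)$), and then throws away the primes where $\varphi^4(\gamma_j)\equiv\varphi^{m-1}(\alpha)$. This buys the paper uniformity for free: all constants live over the fixed field $K$.

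Your route introduces a genuine gap. When you apply Proposition~\ref{YamaUse} over $L$, the error term is $3e^2\sum_j h_L(\gamma_j)+2g_L$, and $L$ varies with $\alpha$. Your uniformity paragraph handles only the height sum $\sum_j h_L(\gamma_j)$ via the canonical-height identity; it says nothing about $g_L$, which is not a Weil height and is not covered by ``expressing every auxiliary Weil height \ldots\ through the canonical height.'' The claim that the total error has the shape $(\text{constant in }K,\varphi,I)\cdot\hhat_\varphi(\alpha)+O(1)$ is therefore unjustified as written. The gap is fixable---Riemann--Hurwitz applied to $K(\zeta)/K$, whose ramification is governed by the discriminant of $\varphi(z)-\varphi^m(\alpha)$, gives $g_L=O(d^m\hhat_\varphi(\alpha))+O_{K,\varphi}(1)$---but you must supply that argument, and the resulting constant depends on $m$ as well as $K,\varphi,I$. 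The paper's trick of baking the condition $\varphi^4(\gamma)\neq\varphi^{m-1}(\alpha)$ into $F$ itself sidesteps all of this.
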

\begin{proof}
We may assume that $\varphi^m(\alpha)\neq \infty$. Otherwise, we can make
the change of variables $z\mapsto \frac{1}{z}$. 

If $m = 0$, then let $F$ be a monic separable polynomial of
degree greater than two such that every root $\gamma$ of $F$
satisfies $\varphi^5(\gamma) = \alpha$. Since $\alpha$ is not preperiodic (because it has positive canonical height), then also each root $\gamma$ of $F$ is not periodic, and moreover, $F^\ell(\gamma)\ne \alpha$ for $\ell=0,\dots, 4$. 
If $m > 0$,  and $\varphi$ is not totally ramified
at $\varphi^{m-1}(\alpha)$, then let $F$ be a monic separable polynomial of
degree greater than two such that each $\gamma$ of $F$
satisfies: 
\begin{enumerate}
\item[(i)] $\varphi^5(\gamma) = \varphi^m(\alpha)$ and  $\varphi^\ell(\gamma) \not=\varphi^m(\alpha)$ for $\ell = 0, \dots, 4$ (also, $\gamma$ is not periodic because $\varphi^m(\alpha)$ is not periodic); and  
\item[(ii)] $\varphi^4(\gamma)
\not= \varphi^{m-1}(\alpha)$.
\end{enumerate}  
The fact that $\deg(F)>2$ (or even $\deg(F)\geq 4$) follows from \cite[pp.~142]{Silverman-book}.

As before, there is a constant $C_{10}(\tau)\geq 5$ such that for every $n>C_{10}(\tau)$,  
we have $\varphi^{m+n-5}(\alpha)\neq \infty$. Then, by Proposition~\ref{YamaUse}
there is a constant $C_{11}(m)$ such that:
\begin{equation} \label{n1} \#\{\fp\colon v_\fp (F(\varphi^{m+n-5}(\alpha))) > 0\}
\geq    h_K(\varphi^{m+n-5}(\alpha))  - C_{11}(m) 
\end{equation}
for every $n>C_{10}(\tau)$.   

For $n>C_{10}(\tau)$, we let $W_n$ be the set of primes $\fp$ of $K$ such that either
$\varphi$ has bad reduction at $\fp$ or 
  \[\max(d_\fp(\varphi^{\ell}(\varphi^m(\alpha)),\varphi^m(\alpha)), 
|F(\varphi^{n-5}(\varphi^m(\alpha)))|_\fp   ) <1\] for some positive integer $\ell
< n$.  Proposition~\ref{old} (for $\beta=\varphi^m(\alpha)$, $i=5$, and $\delta=\frac{1}{2d^5}$) shows that there are constants
$C_{12}(m,\tau)>C_{10}(\tau)$ and $C_{13}(m,\tau)$ such that for all $n >C_{12}(m,\tau)$, we have
\begin{equation}\label{n2}
\# W_n  \leq \frac{1}{2d^5} h_K(\varphi^{m+n}(\alpha)) + nC_{13}(m,\tau).
\end{equation}


If $m > 0$, let $\cE$ be the set places $\fp\in\Omega_K$ of good reduction such that
$r_\fq(\varphi^4(\gamma)) = r_\fq(\varphi^{m-1}(\alpha))$ for some
root $\gamma$ of $F$
and some place $\fq$ of $K(\gamma)$ above $\fp$.  If $m = 0$, we let $\cE$ be the empty set.  There is a constant $C_{14}(m,\tau)$ such that:
\begin{equation}\label{n3}
\# \cE  \leq C_{14}(m,\tau).
\end{equation}

Note that $h_K(\varphi^{r}(\alpha))=d^r\hhat_\varphi(\alpha)+O(1)$
for every integer $r\geq 0$ and every $\alpha\in \bP^1(\Kbar)$,
where $O(1)$ only depends on $K$ and $\varphi$. From the right-hand sides of \eqref{n1}, \eqref{n2}, and \eqref{n3} together with Remark \ref{rem:Gamma},
there is a constant $C_{15}(m,\tau,s)>C_{12}(m,\tau)$ such that for every $n>C_{15}(m,\tau,s)$ and every $\alpha\in \bP^1(K)$ satisfying $\hhat_\varphi(\alpha)\geq \tau$,
there is a place $\fp\in \Omega_K$ satisfying the following conditions:
\begin{itemize}
	\item [(I)] $\fp\notin W_n\cup\cE\cup S$.
	\item [(II)] $|F(\varphi^{m+n-5}(\alpha))|_\fp<1$ and there are a root $\gamma$ of $F$ and a place $\fq\mid\fp$
	of $K(\gamma)$ such that $r_\fq(\varphi^{m+n-5}(\alpha))=r_\fq(\gamma)$.
\end{itemize}
Now the condition (I) implies $r_\fp(\varphi^{m+n}(\alpha))=r_\fq(\varphi^m(\alpha))$.
The conditions $\fp\notin W_n$ and $|F(\varphi^{m+n-5}(\alpha))|_\fp<1$ imply
$d_\fp(\varphi^{m+\ell}(\alpha),\varphi^m(\alpha))\geq 1$ for every positive integer $\ell<n$. Hence
$r_\fp(\varphi^m(\alpha))$ has minimum period $n$. Finally when $m>0$, by the
definition of $\cE$ and Lemma \ref{L1}, the condition $\fp\notin\cE$
implies that $\varphi^{m-1}(\alpha)$ is not periodic modulo $\fp$. Therefore $\alpha$
modulo $\fp$ has portrait $(m,n)$.
%
\end{proof}

\subsection{Proof of Theorem \ref{MN}: large $m$ and $n$}
Let $\tau$, $s$, and $S$ be as in Theorem \ref{MN}. We now show that there are
constants $C_{16}(\tau,s)$ and $C_{17}(\tau,s)$ such that for every
$m\geq C_{16}(\tau,s)$, $n\geq C_{17}(\tau,s)$,
and $\alpha\in\bP^1(K)$ satisfying $\hhat_\varphi(\alpha)\geq \tau$,
there is a place $\fp\in\Omega\setminus S$ such that
$\alpha$ has portrait $(m,n)$ modulo $\fp$. Combining this with
Propositions \ref{prop:small n} and \ref{small m}, we  finish
the proof of Theorem \ref{MN}.

There is a constant $C_{18}(\tau)\geq 2$ such that for every $m\geq C_{18}(\tau)$
and every $\alpha\in\bP^1(K)$ satisfying $\hhat_\varphi(\alpha)\geq \tau$,
we have $\varphi^m(\alpha)\neq \infty$ and:
\begin{equation}\label{eq:C18}
  \text{$\varphi^{-2}(\varphi^m(\alpha))$
   contains neither $\infty$ nor any ramification points of $\varphi$.}
\end{equation}
Note that the above conditions are satisfied when $d^{m-2}\tau$ is
greater than the canonical heights of $\infty$ and  of the ramification points of $\varphi$. 
%
%

Fix any $m\geq C_{18}(\tau)$ and $\alpha\in \bP^1(K)$ satisfying $\hhat_\varphi(\alpha)\geq \tau$. Now, there
 are  $d^2 - 1 \geq 3$ distinct points $\eta_i \in \bK$ such that $\eta_i\ne \varphi^{m-2}(\alpha)$ and 
  $\varphi^2(\eta_i) = \varphi^m(\alpha)$ for $i = 1, \dots , d^2 - 1$.
  Let $e = d^2 - 1$ and $F(z) = \prod_{i=1}^e (z - \eta_i)$.  Applying
  Proposition~\ref{YamaUse} and using the fact that $| h_K - \hhat_\varphi|$
  is uniformly bounded, we have that there exist constants $C_{19}$ and $C_{20}$ such that the following inequality holds for every positive integer $n$:
\begin{equation}\label{eq:use Yamanoi}
\begin{split}
\#\{ \fp \colon    v_\fp (F(\varphi^{m+n-2}(\alpha))) > 0
\}  &\geq
\hhat_\varphi(\varphi^{m+n-2}(\alpha))   - C_{19}  \sum_{i=1}^e \hhat_\varphi(\eta_i)  -
C_{20}\\
	& =\hhat(\alpha)\left(d^{m+n-2}-eC_{19}d^{m-2}\right)-C_{20}. 
\end{split} 
\end{equation}
Hence there is a constant $C_{21}(\tau,s)$ such that for every $n>C_{21}(\tau,s)$,
we have:
\begin{equation}\label{eq:3/4}
	\#\{\fp\in\Omega_K\setminus S:\ v_\fp(F(\varphi^{m+n-2}(\alpha)))>0\}\geq
	\frac{3}{4}\hhat_\varphi(\alpha)d^{m+n-2}
\end{equation}

Let $L$ be the splitting field of $K$. We now argue as in Remark \ref{rem:Gamma} as follows. Let $\Gamma$
be the set of places $\fp\in\Omega_K$ such that there are some $\fq\mid\fp$ in
$\Omega_L$ and some root $\eta_i$ of $F$ satisfying $|\eta|_\fq>1$. Then we have:
\begin{equation}\label{eq:size Gamma}
	\#\Gamma\leq\sum_{i=1}^e h_K(\eta_i)\leq e\hhat_\varphi(\alpha)d^{m-2}+O(1)
\end{equation}
where $O(1)$ only depends on $K$ and $\varphi$.
As in Remark \ref{rem:Gamma}, if $\fp\notin \Gamma$, $\varphi$ has good
reduction at $\fp$, and if $|F(\varphi^{m+n-2}(\alpha))|_\fp<1$
then there is a root $\eta_i$ of $F$, and a place $\fq$ of $L$ lying above $\fp$  such that
$r_\fq(\eta_i)=r_\fq(\varphi^{m+n-2}(\alpha))$. This implies
$r_\fp(\varphi^m(\alpha))=r_\fp(\varphi^{m+n}(\alpha))$.
Therefore, from \eqref{eq:3/4} and \eqref{eq:size Gamma} there
is a constant $C_{22}(\tau,s)>C_{21}(\tau,s)$ such that for every
$n>C_{22}(\tau,s)$, we have:
\begin{equation}\label{eq:main}
	\#\{\fp\in\Omega_K\setminus S:\ r_\fp(\varphi^{m+n}(\alpha))=r_\fp(\varphi^m(\alpha))\}\geq \frac{1}{2}\hhat_\varphi(\alpha)d^{m+n-2}
\end{equation}
 
%

Let $\cE_1$ be the set of
primes $\fp\in\Omega_K$ such that there are a prime $\fq\mid\fp$ of $L$ and
a root $\eta_i$ of $F$ satisfying 
$r_\fq(\varphi(\eta_i)) = r_\fq(\varphi^{m-1}(\alpha))$. 

If $r_\fq(\varphi(\eta_i)) = r_\fq(\varphi^{m-1}(\alpha))$ then we have either 
$v_\fq(\varphi^{m-1}(\alpha))<0$
or $v_\fq(\varphi(\eta_i)-\varphi^{m-1}(\alpha))>0$. Therefore: 
  \begin{equation}\label{eq:E1}
  \begin{split}
  \#\cE_1&\leq h_K(\varphi^{m-1}(\alpha))+\sum_{i=1}^e h_K(\varphi(\eta_i)-\varphi^{m-1}(\alpha))\\
  &\leq h_K(\varphi^{m-1}(\alpha))+\sum_{i=1}^e (h_K(\varphi(\eta_i))+h_K(\varphi^{m-1}(\alpha)))\\
  &=(e+1)\hhat_\varphi(\alpha)d^{m-1}+e\hhat_\varphi(\alpha)d^{m-2}+O(1)
  \end{split}
  \end{equation}
where $O(1)$ depends only on $K$ and $\varphi$.

Hence there is $C_{23}(\tau,s)$ such that
for every $n>C_{23}(\tau,s)$,we have:
\begin{equation}\label{eq:size E1}
	\#\cE_1\leq \frac{1}{8}\hhat_\varphi(\alpha)d^{m+n-2}.
\end{equation}



Let $\cE_2$ be the set of primes $\fp$ of good reduction such that $r_\fp (\varphi^{n'}
(\varphi^m(\alpha))) = r_\fp(\varphi^m(\alpha))$ for $n'$ a proper
divisor of $n$. As before, we either have $v_{\fp}(\varphi^m(\alpha))<0$
or $v_\fp(\varphi^{m+n'}(\alpha)-\varphi^m(\alpha))>0$. Hence, we have
 \begin{equation}\label{eq:E2}
 \# \cE_2 \leq h_K(\varphi^m(\alpha))+\sum_{p | n} h_K\left( \varphi^{n/p}(\varphi^m(\alpha)) -
  \varphi^m(\alpha) \right)
  \end{equation}
where $p$ ranges over the distinct prime factors of $n$.
There is an absolute constant $C_{24}$ such that for all  $n \geq C_{24}$, the number of distinct prime
factors of $n$ is less than $\log n$.  Then
for $n\geq C_{24}$:  
\begin{equation} \begin{split} 
 \# \cE_2 & \leq  \hhat_\varphi(\varphi^m(\alpha))+ \sum_{p | n}\left(\hhat_\varphi\left( \varphi^{n/p}(\varphi^m(\alpha))\right) +
 \hhat_\varphi(\varphi^m(\alpha))\right)+C_{25}\log(n)\\
 & \leq  (\log n+1)\hhat_\varphi(\alpha)d^m+  (\log n)\hhat_\varphi(\alpha)d^{m+\frac{n}{2}}+C_{25}\log n 
 \end{split}\end{equation}
where $C_{25}$ depends only on $K$ and $\varphi$. 

Since $d^{n}$ dominates both $(\log(n))d^{n/2}$
and $(\log n+1)$ when $n$ grows sufficiently large (and independently from $m$), there exists a constant $C_{26}>C_{24}$ depending only on $K$ and $\varphi$ such that
for $n>C_{26}$, we have:
\begin{equation}\label{eq:size E2}
	\#\cE_2\leq \frac{1}{8}\hhat_\varphi(\alpha) d^{m+n-2}
\end{equation}

For $m>C_{18}(\tau)$ and for  $n>\max\left\{C_{22}(\tau,s),C_{23}(\tau,s),C_{26}\right\}$, from
\eqref{eq:main}, \eqref{eq:size E1}, and \eqref{eq:size E2} there exist
at least $\frac{1}{4}\hhat_\varphi(\alpha)d^{m+n-2}$
 many primes $\fp\in \Omega_K\setminus S$
such that the following conditions hold:
\begin{itemize}
	\item [(I)] $r_\fp(\varphi^{m+n}(\alpha))=r_\fp(\varphi^m(\alpha))$.
	\item [(II)] $\fp\notin \cE_1\cup\cE_2$.
\end{itemize}

Condition (I) together with $\fp\notin \cE_2$ imply that $\varphi^m(\alpha)$
has minimum period $n$ under the action of $\varphi$ modulo $\fp$. Condition $\fp\notin \cE_1$
together with Lemma \ref{L1} imply that
$\varphi^{m-1}(\alpha)$ is not periodic modulo $\fp$. Hence $\alpha$
has portrait $(m,n)$ modulo $\fp$, which finishes the proof of Theorem \ref{MN}.

\section{Proof of the applications of Theorem~\ref{MN}}
\label{proof of second main theorem}

Using Theorem~\ref{MN} we can prove now its applications. First we prove Theorem~\ref{arbitrary portraits}, and then we will prove Theorems~\ref{thm:projection} and \ref{thm:dual arbitrary portraits}.


\subsection{Simultaneous multiple portraits}

We begin with a few simple lemmas.
\begin{lemma}\label{lem:generic portrait}
Let $\varphi(z)\in K(z)$ be a rational function of degree $d>1$ and let $\alpha\in \bP^1(K)$ be a preperiodic point with portrait $(m,n)$. Then for all but finitely many places $\fp\in\Omega_K$
of good reduction, $\alpha$ modulo $\fp$ has portrait $(m,n)$.
\end{lemma}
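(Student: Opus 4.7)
The plan is to unpack the global portrait condition into a handful of equalities and inequalities in $\bP^1(K)$, and then transfer each one to almost all places of good reduction using the standard finiteness fact recorded in \eqref{eq:new Liouville}.

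First I would translate ``$\alpha$ has portrait $(m,n)$'' into the following concrete list of global conditions in $\bP^1(K)$:
\begin{itemize}
\item[(a)] $\varphi^{m+n}(\alpha) = \varphi^m(\alpha)$;
\item[(b)] for every proper divisor $n'$ of $n$, $\varphi^{m+n'}(\alpha) \neq \varphi^m(\alpha)$;
\item[(c)] if $m \geq 1$, then $\varphi^{m+n-1}(\alpha) \neq \varphi^{m-1}(\alpha)$.
\end{itemize}
The key observation here is that, for (c), if $\varphi^{m-1}(\alpha)$ were periodic then it would sit in the same cycle as $\varphi^m(\alpha)$ and the cycle of $\varphi^m(\alpha)$ has minimum period $n$, which forces $\varphi^{m+n-1}(\alpha) = \varphi^{m-1}(\alpha)$. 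So (c) is indeed equivalent to the non-periodicity of $\varphi^{m-1}(\alpha)$.

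Next I would push each of these to the reduction. For any place $\fp$ of good reduction for $\varphi$, reduction commutes with iteration, so (a) yields $\varphi^{m+n}(\alpha) \equiv \varphi^m(\alpha) \pmod{\fp}$ for \emph{every} such $\fp$. This gives the ``periodic of period dividing $n$'' half of the portrait statement modulo $\fp$, together with the preperiod being at most $m$. It remains to show that for all but finitely many $\fp$ the strict inequalities in (b) and (c) survive.

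For this I would apply \eqref{eq:new Liouville} (or, equivalently, the elementary fact that two distinct points of $\bP^1(K)$ have the same reduction at only finitely many places): each of the finitely many pairs
\begin{equation*}
\bigl(\varphi^{m+n'}(\alpha),\,\varphi^m(\alpha)\bigr) \quad (n' \mid n,\ n' < n), \qquad
\bigl(\varphi^{m+n-1}(\alpha),\,\varphi^{m-1}(\alpha)\bigr)
\end{equation*}
consists of two distinct points of $\bP^1(K)$, so the set of places where they coincide modulo $\fp$ is finite. Excluding this finite union together with the finitely many places of bad reduction, I obtain a cofinite set of places $\fp$ at which (a), (b), and (c) all persist modulo $\fp$, which is exactly the statement that $\alpha$ has portrait $(m,n)$ modulo $\fp$. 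There is no real obstacle here; the only minor bookkeeping point is the equivalence between non-periodicity of $\varphi^{m-1}(\alpha)$ and the single inequality (c), which is handled by the cycle observation above.
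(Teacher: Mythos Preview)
Your argument is correct and essentially identical to the paper's: the paper defines the exceptional set $\cE$ by exactly your conditions (c) and (the prime-divisor version of) (b), shows $\cE$ is finite via \eqref{eq:new Liouville}, and then checks the portrait is $(m,n)$ outside $\cE$. The only cosmetic differences are that the paper restricts (b) to maximal proper divisors $n/\ell$ and packages your cycle observation for (c) as an appeal to Lemma~\ref{L1}.
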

\begin{proof}
	Let $\cE$ be the set of places $\fp\in\Omega_K$ of good reduction
	such that the following two conditions hold:
	\begin{itemize}
		\item [(i)] If $m>0$, we have $r_\fp(\varphi^{m-1}(\alpha))=r_\fp(\varphi^{m+n-1}(\alpha))$.
		\item [(ii)] For some prime divisor $\ell$ of $n$, we have
		$r_\fp(\varphi^{m+\frac{n}{\ell}}(\alpha))=r_\fp(\varphi^m(\alpha))$. 
	\end{itemize}
	By \eqref{eq:new Liouville}, $\cE$ is finite. By Lemma \ref{L1}, for
	every place $\fp\in \Omega_K\setminus \cE$, we have $\alpha$ modulo $\fp$
	has portrait $(m,n)$.
\end{proof}

We have the following lemma for determining when polynomials in
normal form are isotrivial.

\begin{lemma}\label{lem:iso}
Let $k$ be an algebraically closed field of characteristic $0$, and
let $K$ be a finitely generated function field over $k$ of
transcendence degree equal to $1$.  Let $\varphi(z) = z^d + a_{d-2}
z^{d-2} + \dots + a_0 \in K[z]$ where $d \geq 2$.  Then $\varphi$ is isotrivial if and
only if $\varphi \in k[z]$.  
\end{lemma}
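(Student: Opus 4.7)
The ``if'' direction is trivial: take $\sigma(z)=z$. For the ``only if'' direction, my plan is to suppose $\varphi$ is isotrivial via some degree-$1$ map $\sigma\in\Kbar(z)$ with $\psi:=\sigma^{-1}\circ\varphi\circ\sigma\in k(z)$, and first reduce to the case that $\sigma$ is affine over $\Kbar$. The key observation is that $\infty$ is a totally ramified fixed point of $\varphi$ (since $\varphi\in K[z]$ has degree $d\geq 2$), so $\sigma^{-1}(\infty)$ is a totally ramified fixed point of $\psi$. Because $\psi\in k(z)$ and $k$ is algebraically closed, the equation $\psi(w)=w$ has all its solutions in $\bP^1(k)$, so in particular $\sigma^{-1}(\infty)\in\bP^1(k)$. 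Post-composing $\sigma$ with a M\"obius transformation $\tau\in k(z)$ sending $\infty$ to $\sigma^{-1}(\infty)$ replaces $\sigma$ by $\sigma\tau$, which now fixes $\infty$, while updating $\psi$ to $\tau^{-1}\psi\tau\in k(z)$. Hence without loss of generality $\sigma(z)=\alpha z+\beta$ for some $\alpha\in\Kbar^*$, $\beta\in\Kbar$.

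Since affine conjugation of a polynomial is a polynomial, the new $\psi$ is a polynomial in $k[z]$. Next I would put $\psi$ into normal form by conjugating by a $k$-affine map, which leaves $\psi$ in $k[z]$ and merely updates the affine $\sigma$ over $\Kbar$. So both $\varphi$ and $\psi$ become monic polynomials in normal form that are linearly conjugate via $\sigma(z)=\alpha z+\beta$. A routine expansion of $\sigma\psi\sigma^{-1}=\varphi$ and comparison of the $z^d$ and $z^{d-1}$ coefficients forces $\alpha^{d-1}=1$ and $\beta=0$. As $k$ contains all roots of unity (being algebraically closed of characteristic $0$), $\alpha\in k$, whence $\varphi(z)=\alpha\,\psi(z/\alpha)\in k[z]$.

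The main obstacle is conceptual rather than computational: showing $\sigma^{-1}(\infty)\in\bP^1(k)$ so that $\sigma$ can be adjusted to an affine map over $\Kbar$; once this is done the remaining steps are bookkeeping with coefficients in normal form. The reason this argument works for polynomials but not for arbitrary rational maps is precisely that every polynomial of degree $\geq 2$ comes with a canonical $\Kbar$-rational totally ramified fixed point at $\infty$, which is what forces the descent from $\Kbar$ to $k$.
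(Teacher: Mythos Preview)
Your proof is correct and follows the same skeleton as the paper's: show $\sigma^{-1}(\infty)\in\bP^1(k)$, reduce to $\sigma$ affine, and then read off from the top two coefficients that $\sigma\in k[z]$. The one genuine difference is how you establish $\sigma^{-1}(\infty)\in\bP^1(k)$. The paper argues via canonical heights: since $\tilde\varphi:=\sigma^{-1}\varphi\sigma\in k(z)$, one has $\hhat_{\tilde\varphi}=h_K$, and since $\infty$ is $\varphi$-fixed, $h_K(\sigma^{-1}(\infty))=\hhat_{\tilde\varphi}(\sigma^{-1}(\infty))=\hhat_\varphi(\infty)=0$, forcing $\sigma^{-1}(\infty)\in\bP^1(k)$. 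You instead observe directly that $\sigma^{-1}(\infty)$ is a fixed point of $\psi\in k(z)$, and fixed points of a rational map defined over the algebraically closed field $k$ lie in $\bP^1(k)$. Your route is more elementary and self-contained; the paper's height argument, while heavier here, immediately generalizes to show that \emph{every} $\varphi$-preperiodic point in $\bP^1(\Kbar)$ is carried by $\sigma^{-1}$ into $\bP^1(k)$. For the final step the two arguments are reorganizations of the same computation: the paper reads $b_1^{d-1}\in k$ and then $b_0\in k$ directly from the leading coefficients of $\sigma^{-1}\varphi\sigma$, whereas you first normalize $\psi$ over $k$ and then conclude $\alpha^{d-1}=1$, $\beta=0$. (Incidentally, the ``totally ramified'' clause in your key observation is not actually used; that $\infty$ is a fixed point suffices.)
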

\begin{proof}
Suppose that $\sigma^{-1}\circ \varphi\circ \sigma\in k(z)$.  Let
${\tilde \varphi}$ denote $\sigma^{-1}\circ \varphi\circ \sigma$.
Then $h_{\varphi}(x) = h_{\tilde \varphi}(\sigma^{-1}(x)) = h(\sigma^{-1}(x))$ for all
$x$.  Let
$\beta$ denote the point at infinity.   Then $h(\sigma^{-1}(\beta)) =
0$, so $\beta \in \bP^1(k)$.  Hence, after composing $\sigma$ with a
degree one element of $k(z)$, we may suppose that $\sigma(\beta) =
\beta$, which means that  $\sigma$ is a polynomial $b_1 z + b_0 \in
\Kbar[z]$.  Since   
\[ \sigma^{-1}\circ \varphi\circ \sigma\in k(z) =
b_1^{d-1} z^d + d b_1^{d - 2} b_0 z^{d-1}
+ \text{lower order terms},\]
we see that $b_1 \in k$ and that $b_0$ must therefore be in $k$ as well.  Thus,
$\sigma \in k[z]$, so $\varphi \in k[z]$.  
\end{proof}

The following lemma is crucial for the proof of Theorem~\ref{arbitrary portraits}.

\begin{lemma}
\label{preperiodicity lemma}
Let $k$ be an algebraically closed field of characteristic $0$, let $K$ be a finitely generated function field over $k$ of transcendence degree equal to $1$, let $d \geq 2$ be an integer and let $m$ be an integer such that $0
\leq m \leq d-2$.  Let 
\[ f(z) = z^d + a_{d-2} z^{d-2} + \dots + a_0 \]
where 
\begin{enumerate}
\item $a_i \in K$ for all $i$;
\item $a_i \in k$ for $i > m$;  and 
\item there is some $j \leq m$ such that $a_j \in K \setminus k$.
\end{enumerate}
Then there are at most $m$ distinct constants $x \in k$ such that
$\hhat_f(x) < \frac{1}{d}$. 
\end{lemma}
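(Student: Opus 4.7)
The plan is to show that the set $\{x \in k : \hhat_f(x) < 1/d\}$ is contained in a set of cardinality at most $m$, by splitting off the ``potentially non-constant part'' of $f$ and handling it both combinatorially and via local heights.

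First, I would decompose $f(z) = c(z) + P(z)$, where $c(z) = z^d + a_{d-2}z^{d-2} + \cdots + a_{m+1}z^{m+1}$ lies in $k[z]$ by hypothesis (2), and $P(z) = a_m z^m + \cdots + a_0 \in K[z]$ has degree at most $m$ and is not in $k[z]$ by hypothesis (3). Picking a $k$-linear splitting $K = k \oplus V$ with projection $\psi\colon K \twoheadrightarrow V$ (so $\ker\psi = k$), I would form $\tilde P(z) := \sum_i \psi(a_i)\,z^i \in K[z]$; since some $\psi(a_j) \neq 0$, $\tilde P$ is a nonzero polynomial of degree at most $m$. For $x \in k$, $\psi(P(x)) = \tilde P(x)$ (as $x^i \in k$), so $P(x) \in k$ iff $\tilde P(x) = 0$, giving $|R| \leq m$ where $R := \{x \in k : P(x) \in k\}$.

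The heart of the proof is then to show that for $x \in k\setminus R$, $\hhat_f(x) \geq 1/d$. For such $x$, $f(x) = c(x) + P(x) \in K\setminus k$ with $h_K(f(x)) = h_K(P(x)) \geq 1$ (heights on $K$ are non-negative integers, positive on non-constants). I plan to control $\hhat_f(f(x))$ via local canonical heights $\hat\lambda_{f,\fp}$. At good reduction places, $\hat\lambda_{f,\fp}(y) = \log^+|y|_\fp$. At a bad place $\fp$ where $|P(x)|_\fp > 1$, the ultrametric inequality together with $|x|_\fp \leq 1$ gives $|f(x)|_\fp = |P(x)|_\fp$; one then checks that $|P(x)|_\fp > T_\fp := \max_{i<d}|a_i|_\fp^{1/(d-i)}$ using that every ``bad'' coefficient has index $i \leq m \leq d-2$, so $d-i \geq 2$ and hence $|a_i|_\fp^{1/(d-i)} \leq |a_i|_\fp^{1/2}$. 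Thus the orbit of $f(x)$ escapes immediately at every such $\fp$, giving $\hat\lambda_{f,\fp}(f(x)) = \log|P(x)|_\fp$. Summing over all places yields $\hhat_f(f(x)) \geq h_K(P(x)) \geq 1$, and hence $\hhat_f(x) = \hhat_f(f(x))/d \geq 1/d$.

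Combining these two steps, $\{x \in k : \hhat_f(x) < 1/d\} \subseteq R$, a set of cardinality at most $m$. The main technical obstacle is the last local analysis: if several bad coefficients of $f$ happen to attain the maximum $\fp$-adic norm simultaneously, cancellation could push $|P(x)|_\fp$ into the range $(1, T_\fp]$, breaking the immediate-escape argument. This would have to be handled either by iterating $f$ once more to force $|f^2(x)|_\fp$ above $T_\fp$, or by arguing that the collection of $x$ causing such cancellation is itself cut out by a nonzero auxiliary polynomial of degree at most $m$ (analogous to $\tilde P$) and can be absorbed into an enlargement of $R$ of the same size bound.
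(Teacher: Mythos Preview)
Your plan has the right skeleton, but the local-height step is where it leaks, and your own caveat identifies exactly the problem: for $x\in k\setminus R$ you only know $|P(x)|_\fp>1$ at some bad place, not $|P(x)|_\fp>T_\fp$, and cancellation among the $a_i x^i$ can genuinely push $|P(x)|_\fp$ anywhere in $(1,\max_i|a_i|_\fp)$. Your proposed fix of iterating once more does not obviously terminate, and your proposed fix of cutting out the cancellation locus by a degree-$\le m$ polynomial only works \emph{place by place}; since you are summing local heights over all bad places, the union of these exceptional loci need not have size $\le m$. So as written the containment $\{x\in k:\hhat_f(x)<1/d\}\subseteq R$ is unproved (and may well be strictly stronger than what is needed).

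The paper's proof sidesteps all of this by working at a \emph{single} place $\fp$ where some $a_j$ with $j\le m$ has a pole, and using the cruder escape threshold $M:=\max_i|a_i|_\fp$ rather than $T_\fp$. If $|f(x)|_\fp\ge M$ then one checks directly that $|f^{n+1}(x)|_\fp=|f(x)|_\fp^{d^n}$, so the contribution of this one place already gives $\hhat_f(x)\ge\frac1d\log M\ge\frac1d$. The complementary set $\{x\in k:|f(x)|_\fp<M\}$ is exactly the zero set of the reduction mod $\fp$ of $\pi^N f(z)$, where $N=-\min_i v_\fp(a_i)>0$; since $\pi^N z^d$ and $\pi^N a_i z^i$ for $i>m$ all reduce to $0$ (the latter because $a_i\in k$ and $N\ge 1$), this reduction is a nonzero polynomial in $k[z]$ of degree at most $m$. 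This single-place reduction polynomial plays simultaneously the role of your $\tilde P$ \emph{and} your proposed ``cancellation'' polynomial, and it makes the summation over places and the decomposition $K=k\oplus V$ unnecessary.
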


\begin{proof}
By (3), there is some place $\fp$ of $K$ such that $|a_j|_\fp >1$ for some
$j \leq m$; fix this $\fp$.  Take
any $x \in K$ such that $|x|_\fp \geq \max_i |a_i|_\fp$.  Then, for all $i
\leq d -2$, we have $|a_i x^i|_\fp \leq |x^{i+1}|_\fp < |x^d|_\fp$, so 
$|f(x)|_\fp = |x|_\fp^d$.  By induction, we then have $|f^n(x)|_\fp = |x|_\fp^{d^n}$ for all $n$.  Thus, in particular for any $\alpha \in k$ such that $|f(\alpha)|_\fp
\geq \max_i |a_i|_\fp$, we have  
\[ \hhat_f(\alpha) = \frac{1}{d} \hhat_f (f(\alpha)) \geq \frac{1}{d}. \] 

Thus, it suffices to show that there are at most $m$ constants
$x \in k$ such that  $|f(x)|_\fp < \max_i |a_i|_\fp$. Let $N =-
\min_i v_\fp(a_i)$; then $N>0$.  Let $\pi\in K$ be a generator for the maximal ideal $\fp$.  Then it suffices to show that at there are at most $m$ constants
$x \in k$ such that  $|\pi^N f(x)|_\fp < 1$.  Now, for each $a_i$, we
have that $\pi^N a_i$ is in the local ring at $\fp$.  We let $b_i$
denote the image of $\pi^N a_i$ in the residue field $k_\fp$ of $\fp$ which
is canonically isomorphic to $k$.
If $|\pi^N f(x)|_\fp <1$
for $x \in k$, then we have
\begin{equation}\label{b}
b_m x^m + \dots + b_0 = 0
\end{equation}
since $b_i = 0$ for all $i > m$ by (2) (note that $N\ge 1$).  Because $b_i \not= 0$ for some
$i \leq m$, we see that \eqref{b} has at most $m$ solutions $x$, and our
proof is complete. 
\end{proof}

\begin{proof}[Proof of Theorem~\ref{arbitrary portraits}.]
When $d=2$, we have $f(z)=z^2+a_0$ defined over the function field $k(a_0)$. By Corollary \ref{cor:all bad n} and the Kisaka's classification \cite{Kisaka},
we have that $X(f)=\emptyset$. For every constant $c_0\in k$, there does
not exist a \textit{positive} integer $m$ such that $f^m(c_0)=0$, hence
the set $Y(f,c_0)$ is empty. By Lemma \ref{preperiodicity lemma}, $\hhat_f(c_0)\geq 1/2$. We apply Theorem \ref{MN} to get the desired conclusion.

From now on, assume $d\geq 3$. Every polynomial of degree $d$ in normal form whose coefficient
$a_{d-2}$ is nonzero is not totally ramified at any point (other than $\infty$). We
will repeatedly use this observation and apply Theorem \ref{MN}
(see also Corollary~\ref{eg:polynomial no total ramification}). 
We prove Theorem~\ref{arbitrary portraits} by showing inductively the existence of the $a_i$'s realizing the portraits for the $c_i$'s. The $a_i$'s will be first independent variables, and then we make a series of specializations of the $a_i$'s which  we  call in turn  $a_{i,0}, a_{i,1}, \cdots$, until we specialize all the variables to values in $k$.

First, we let $k_0:=\overline{k(a_1,a_2,\dots,a_{d-2})}$, and we apply Theorem~\ref{MN} (see
Corollary~\ref{eg:polynomial no total ramification}) to the polynomial 
$$f_0(z)=z^{d}+a_{d-2}z^{d-2}+\cdots + a_1z+a_0$$ 
defined over the function field $K_0:=k_0(a_0)$ with the starting point $c_0$. We note that by Lemma~\ref{preperiodicity lemma}, we know that $\hhat_{f_0}(c_0)\ge \frac{1}{d}$. Take the exceptional set of of places $S_0$
to be the set containing only the place ``at infinity'' of $K_0=k_0(a_0)$
which is the only pole of $a_0$.
Hence we obtain the existence of a co-finite set $Z^{(0)}\subset \Z_{\ge 0}\times \N$ of portraits such that for all $(m_0,n_0)\in Z^{(0)}$, there exists $a_{0,0}\in k_0$ such that $c_0$ has portrait $(m_0,n_0)$ with respect to $$f_1(z):=z^{d}+a_{d-2}z^{d-2}+\cdots +a_2z^2+a_{1}z+a_{0,0}.$$ 
This is the polynomial $f_0$ obtained after specializing $a_0$ to $a_{0,0}\in k_0$.  This specialization is equivalent with reducing $f_0$ modulo a place of $K_0$. Fix
$(m_0,n_0)\in Z^{(0)}$ and a corresponding $a_{0,0}$.


Next we let $k_1:=\overline{k(a_2,\dots, a_{d-2})}$ and we regard  
$f_1(z)$ above
as a polynomial defined over the function field $K_1:=k_1(a_1,a_{0,0})$ (note that ${\rm trdeg}_{k_1}K_1=1$ because $a_{0,0}\in \overline{k_1(a_1)}$). Applying Lemma~\ref{preperiodicity lemma} to $f_1(z)$ we conclude that there exists at most one constant point $c\in k_1$ such that $\hhat_{f_1}(c)<\frac{1}{d}$.  Since, by construction, $c_0\in k\subset k_1$ is preperiodic of portrait $(m_0,n_0)$ for $f_1$, we must
have $\hhat_{f_1}(c_1)\geq \frac{1}{d}$.
Let the exceptional set of places $S_1$ consist of places $\fp$ of $K_1$ where $a_1$ or $a_{0,0}$
has a pole, or when $c_0$ does \textit{not} have portrait 
$(m_0,n_0)$ modulo $\fp$. The set $S_1$ is finite by Lemma~\ref{lem:generic portrait}.
Therefore we can apply Theorem~\ref{MN} (see Corollary~\ref{eg:polynomial no total ramification}) and obtain a co-finite set $Z^{(1)}\subset \Z_{\ge 0}\times \N$ of portraits such that
for each $(m_1,n_1)\in Z^{(1)}$, there exists $a_{1,1}\in k_1$ (and in turn $a_{0,1}\in k_1$) such that $c_0$ has portrait $(m_0,n_0)$ and $c_1$ has portrait $(m_1,n_1)$ under the action of  
$$f_2(z):=z^{d}+a_{d-2}z^{d-2}+\cdots + a_2z^2 +a_{1,1}z+a_{0,1}.$$ 
This comes from reducing $a_1$ and $a_{0,0}$ modulo a place in $K_1$ outside $S_1$.
Fix $(m_1,n_1)\in Z^{(1)}$ and also fix corresponding $a_{1,1}$ and $a_{0,1}$.


The above process could be done inductively as follows. Let $i\in\{1,\dots, d-2\}$,  and assume we previously found $Z^{(0)}$, $Z^{(1)}$,..., $Z^{(i-1)}$ and 
fixed $(m_j,n_j)\in Z^{(j)}$ for $0\leq j\leq i-1$ together with
$a_{0,i-1},a_{1,i-1},\ldots,a_{i-1,i-1}\in k_{i-1}$
where $k_{i-1}:=\overline{k(a_i,\ldots,a_{d-2})}$ such that the following hold.
Let $k_i:=\overline{k(a_{i+1},\ldots,a_{d-2})}$
with the understanding that $k_i=k$ when $i=d-2$. Let
$$f_i(z):=z^d+a_{d-2}z^{d-2}+\ldots+a_iz^i+a_{i-1,i-1}z^{i-1}+\ldots+a_{1,i-1}z+a_{0,i-1},$$ 
which is a polynomial defined over the function field $K_i:=k_i(a_i,a_{i-1,i-1},\ldots,a_{0,i-1})$. We now have
that for $0\leq j\leq i-1$, the point $c_j$ has portrait $(m_j,n_j)$
under the action of $f_i$. 

Lemma \ref{preperiodicity lemma} now 
asserts that there are at most $i$ constants $c\in k_i$
such that $\hhat_{f_i}(c)<\frac{1}{d}$. Since $c_0,\ldots,c_{i-1}$
are such constants, we must have $\hhat_{f_i}(c_i)\geq \frac{1}{d}$. Let $S_i$
be the set of places $\fp$ of $K_i$ such that 
$a_i$ has a pole, or for some $0\leq j\leq i-1$ the element $a_{j,i-1}$
has a pole, or for some $0\leq j\leq i-1$ the point $c_j$
modulo $\fp$ does not have portrait $(m_j,n_j)$. This set $S_i$
is finite by Lemma \ref{lem:generic portrait}. By Theorem \ref{MN} (see also
Corollary~\ref{eg:polynomial no total ramification}), 
there exists a co-finite set $Z^{(i)}\subset \Z_{\ge 0}\times \N$ of portraits  such that for every $(m_i,n_i)\in Z^{(i)}$
there exist $a_{0,i},\ldots,a_{i,i}\in k_i$ satisfying the following. For $0\leq j\leq i$, the point $c_j$ has portrait $(m_j,n_j)$
under:
$$f_{i+1}(z):=z^{d}+a_{d-2}z^{d-2}+\ldots+a_{i+1}z^{i+1}+a_{i,i}z^i+\ldots+a_{1,i}z+a_{0,i}.$$
We continue the above process until $i=d-2$, which finishes the proof of Theorem~\ref{arbitrary portraits}.
\end{proof}


\subsection{Almost any portrait is realized by almost any starting point}
Let $\varphi(z)\in K(z)$ having degree $d\geq 2$. In the proof of Theorem~\ref{thm:projection} we will use the following easy fact.
\begin{lemma}\label{lem:finite}
	Let $\varphi(z)\in K(z)$ be non-isotrivial. 
		Then the set $\{\alpha\in \bP^1(K):\ Y(\varphi,\alpha)\neq \emptyset\}$
		is finite.
\end{lemma}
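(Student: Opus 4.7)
The plan is to recast $Y(\varphi,\alpha)\neq\emptyset$ as a backward-orbit condition on a finite set of target points, and then apply a Northcott-type finiteness theorem for canonical heights of non-isotrivial rational maps over function fields.

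First, I would note that $Y(\varphi,\alpha)\neq\emptyset$ holds precisely when some iterate $\varphi^{m-1}(\alpha)$ (with $m\geq 1$) is a totally ramified point of $\varphi$. Since $\alpha\in\bP^1(K)$ forces all iterates to lie in $\bP^1(K)$, only the $K$-rational totally ramified points are relevant, and by the Riemann--Hurwitz formula there are at most two of them. Enumerating the $K$-rational ones as $\beta_1,\ldots,\beta_r$ with $r\leq 2$, we obtain
\[
\{\alpha\in\bP^1(K):Y(\varphi,\alpha)\neq\emptyset\}\;\subseteq\;\bigcup_{i=1}^{r}\bigcup_{n\geq 0}\varphi^{-n}(\beta_i).
\]

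Next, I would bound the canonical heights of points in this union. For any $\alpha$ with $\varphi^n(\alpha)=\beta_i$, the functional equation $\hhat_\varphi(\varphi^n(\alpha))=d^n\hhat_\varphi(\alpha)$ yields $\hhat_\varphi(\alpha)=d^{-n}\hhat_\varphi(\beta_i)\leq B$ for $B:=\max_{1\leq i\leq r}\hhat_\varphi(\beta_i)$. Hence the whole set is contained in $\{\alpha\in\bP^1(K):\hhat_\varphi(\alpha)\leq B\}$.

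Finally, since $\varphi$ is non-isotrivial, the Northcott-type finiteness theorem for canonical heights of non-isotrivial rational maps over function fields \cite{Matt-nonisotrivial} asserts that $\{\alpha\in\bP^1(K):\hhat_\varphi(\alpha)\leq B\}$ is finite, which concludes the proof. The statement already invoked earlier in the paper---finiteness of preperiodic points in $\bP^1(k)$---is the special case $B=0$ restricted to constants of this more general finiteness. The only nonroutine point is identifying the precise form of the Baker-type finiteness we need ($K$-rational points of bounded canonical height, not merely preperiodic constants), but this is the main theorem of \cite{Matt-nonisotrivial}.
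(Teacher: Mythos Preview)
Your reduction to the backward orbit of the totally ramified points is correct and matches the paper's approach. The gap is in the final step: the assertion that $\{\alpha\in\bP^1(K):\hhat_\varphi(\alpha)\leq B\}$ is finite for non-isotrivial $\varphi$ is \emph{false} over function fields. For example, take $\varphi(z)=z^2+t$ over $K=k(t)$; one checks that every $\alpha\in k$ satisfies $\hhat_\varphi(\alpha)=1/2$, so there are infinitely many $K$-rational points of canonical height exactly $1/2$. Baker's theorem in \cite{Matt-nonisotrivial} does not give a Northcott property; it gives that (i) the preperiodic points in $\bP^1(K)$ are finite, and (ii) there is a positive lower bound $\tau$ on $\hhat_\varphi$ over the non-preperiodic points of $\bP^1(K)$.

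The fix is immediate and brings you to exactly the paper's argument. From $\hhat_\varphi(\alpha)=d^{-n}\hhat_\varphi(\beta_i)$ you should not conclude ``bounded height'' but rather split into two cases. If $\alpha$ is preperiodic, invoke (i). If $\alpha$ is not preperiodic, then by (ii) you get $d^{-n}\hhat_\varphi(\beta_i)\geq\tau$, hence $n\leq M$ for some $M$ depending only on $\varphi$ and $K$; thus the non-preperiodic part of your set is contained in the finite union $\bigcup_{i}\bigcup_{n\leq M}\varphi^{-n}(\beta_i)$. This is precisely how the paper proceeds: it exhibits the set as a subset of $\Prep_\varphi(K)\cup R\cup\varphi^{-1}(R)\cup\cdots\cup(\varphi^M)^{-1}(R)$.
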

\begin{proof}
  We use the following two properties following
  from the fact that $\varphi$ is non-isotrivial (see \cite{Matt-nonisotrivial}):
  \begin{itemize}
  	\item [(i)] The set $\Prep_{\varphi}(K)$
  	of preperiodic points in $\bP^1(K)$ is finite.
  	\item [(ii)] There is a positive lower bound
  	$\tau$ for the canonical height of points in
  	$\bP^1(K)\setminus \Prep_{\varphi}(K)$.
  \end{itemize}
  
	Let $R$ be
	the finite set (possibly empty) of points in $\bP^1(\Kbar)$
	where $\varphi$ is totally ramified at.
  There exists $M$ such that
  for every $m>M$ and every 
  $\alpha\in\bP^1(K)\setminus \Prep_{\varphi}(K)$, $\varphi$ is not totally ramified
  at $\varphi^m(\alpha)$. To see this, we 
  simply require that $\hhat_\varphi(\varphi^m(\alpha))>d^M\tau$ is greater than the canonical height
  of any point in $R$. Then the given set in the lemma
  is contained in the finite set:
  $$\Prep_{\varphi}(K)\cup R\cup\varphi^{-1}(R)\ldots
  \cup\left(\varphi^{M}\right)^{-1}(R).$$	
\end{proof}

\begin{proof}[Proof of Theorem~\ref{thm:projection}.]
	Let $T_1$ be the finite set of points in $\bP^1(k)$ consisting of either
	preperiodic points or the points in the set in Lemma \ref{lem:finite}.
	We now have $Y(\varphi,\alpha)=\emptyset$
	for every $\alpha\in \bP^1(k)\setminus T_1$. Note that if $(m,n)\notin W(\varphi)$
	then
	 $n\notin X(\varphi)$ (see Remark \ref{rem:dual main}).

	There is a lower bound $\tau$ on the canonical heights of points in 
	$\bP^1(k)\setminus T_1$ (see \cite{Matt-nonisotrivial}). By Theorem \ref{MN},
	there is a finite set $\cZ(\tau,|S|)$
	such that for every
	$(m,n)\in \left(\Z_{\geq 0}\times \N\right)\setminus (\cZ(\tau,|S|)\cup W(\varphi))$
	and for every $\alpha\in \bP^1(k)\setminus T_1$
	there exists a place $\fp\notin S$
	such that $\alpha$ has portrait $(m,n)$ modulo $\fp$.
	
	Hence it suffices to \textit{fix} an $(m,n)\in \cZ(\tau,|S|)\setminus W(\varphi)$
	and prove that there exists 
	a finite subset $T_2$ of $\bP^1(k)$ (possibly depending on 
	$K$, $\varphi$, $(m,n)$, and $S$) such that the following holds. For every
	$\alpha\in \bP^1(k)\setminus T_2$, there exists a place
	$\fp\notin S$ such that $\alpha$ has portrait $(m,n)$ 
	modulo $\fp$.
	
	Now let $C$ be a nonsingular projective curve over $k$ whose function field is $K$.
	We identify places of $K$ with points in $C$. Choose a Zariski open
	subset $V$ of $C$ such that $V\subseteq C\setminus S$ and $\varphi$
	extends to a morphism from $\bP^1_k\times_k V$ to itself. For every 
	$(\mu,\eta)\in \Z_{\geq 0}\times \N$, the equation
	$\varphi^{\mu+\eta}(z)=\varphi^{\mu}(z)$
	defines a Zariski closed subset $V_{\mu,\eta}$
	of $\bP^1_k\times_k V$ which is equidimensional of dimension 1. Define:
	$$U=V_{m,n}\setminus\left(\bigcup_{\mu=0}^{m-1} V_{\mu,n} \cup
	\bigcup_{p\mid n} V_{m,n/p}\right)$$
	where $p$ ranges over all prime factors of $n$. We have that
	$U$ is a Zariski open subset of $V_{m,n}$. Since $\varphi$ has
	a point of portrait $(m,n)$, the set $U$ is 
	non-empty.
	
	Let $\rho$ denote the projection from $\bP^1_k\times_k V$
	to $\bP^1_k$. We prove that the image $\rho(U)$
	cannot be a finite subset of $\bP^1_k$. Assume otherwise, say
	$\rho(U)=\{u_1,\ldots,u_r\}$. Then this implies that 
	all points of portrait $(m,n)$ under $\varphi$ are the
	constant points $u_1,\ldots,u_r$
	contradicting the assumption $(m,n)\notin W(\varphi)$. Hence $\rho(U)$
	is infinite. Since $\rho(U)$
	is constructible in $\bP^1_k$ by Chevalley's theorem, we must have that $\rho(U)$ 
	is
	co-finite in $\bP^1_k$.
	
	Now for every $\alpha\in \rho(U)$, pick any $P\in \rho^{-1}(\alpha)$,
	and let $\fp\in V$ be the image of $P$ under the projection
	from $\bP^1_k\times_k V$ to $V$. We have that $\alpha$ has portrait 
	$(m,n)$ under the action of $\varphi$ modulo $\fp$. This finishes the proof of Theorem~\ref{thm:projection}.
\end{proof}

We now prove of Theorem~\ref{thm:dual arbitrary portraits}, which in turn relies on Theorem~\ref{thm:projection}.

\begin{proof}[Proof of Theorem~\ref{thm:dual arbitrary portraits}.]
The proof uses an inductive process which is ``dual'' to the proof of Theorem \ref{arbitrary portraits}.
First, we let $k_0:=\overline{k(a_1,a_2,\dots,a_{d-2})}$, and we apply Theorem~\ref{thm:projection} to the polynomial 
$$f_0(z)=z^{d}+a_{d-2}z^{d-2}+\cdots + a_1z+a_0$$ 
defined over the function field $K_0:=k_0(a_0)$.  Then Lemma
\ref{lem:iso} shows that $f_0$ is non-isotrivial. By Lemma \ref{preperiodicity lemma}, there does not exist $c\in k$ which is preperiodic under $f_0$.
Therefore by Remark \ref{rem:dual main} and Kisaka's list \cite{Kisaka},
we have $W(f_0)=\emptyset$, hence $(m_0,n_0)\notin W(f_0)$.  
Take the exceptional set of of places $S_0$
to be the set containing only the place ``at infinity'' of $K_0=k_0(a_0)$
which is the only pole of $a_0$.
Hence we obtain the existence of the co-finite set $T^{(0)}\subset \bP^1(k)$ such that for all $c_0\in T^{(0)}$, there exists $a_{0,0}\in k_0$ such that $c_0$ has portrait $(m_0,n_0)$ under the action of $$f_1(z):=z^{d}+a_{d-2}z^{d-2}+\cdots +a_2z^2+a_{1}z+a_{0,0}.$$ 
This is the polynomial $f_0$ obtained after specializing $a_0$ to $a_{0,0}\in k_0$.  This specialization is equivalent with reducing $f_0$ modulo a place of $K_0$. Fix
$c_0\in T^{(0)}$ and a corresponding $a_{0,0}$.

The above process could be done inductively as follows. Let $i\in\{1,\dots, d-2\}$,  and assume we previously found co-finite sets $T^{(0)}$, $T^{(1)}$,\dots, $T^{(i-1)}\subset \bP^1(k)$ and 
fixed $c_j\in T^{(j)}$ for $0\leq j\leq i-1$ together with
$a_{0,i-1},a_{1,i-1},\ldots,a_{i-1,i-1}\in k_{i-1}$
where $k_{i-1}:=\overline{k(a_i,\ldots,a_{d-2})}$ such that the following hold.
Let $k_i:=\overline{k(a_{i+1},\ldots,a_{d-2})}$
with the understanding that $k_i=k$ when $i=d-2$, and let
$$f_i(z):=z^d+a_{d-2}z^{d-2}+\ldots+a_iz^i+a_{i-1,i-1}z^{i-1}+\ldots+a_{1,i-1}z+a_{0,i-1},$$ 
which is as a polynomial defined over the function field $K_i:=k_i(a_i,a_{i-1,i-1},\ldots,a_{0,i-1})$. Also, for each $j=0,\dots, i-1$, the point $c_j$ has portrait $(m_j,n_j)$ under the action of $f_i$. 

Lemma \ref{preperiodicity lemma} now 
asserts that there are at most $i$ constants $c\in k_i$
such that $\hhat_{f_i}(c)<\frac{1}{d}$. Since $c_0,\ldots,c_{i-1}$
are such constants and
since $(m_i,n_i)$
is distinct from $(m_0,n_0),\ldots,(m_{i-1},n_{i-1})$,
there exists no $c\in k$
such that $c$ has portrait $(m_i,n_i)$ under $f_i$. Therefore
we have $(m_i,n_i)\notin W(f_i)$.
Let $S_i$
be the set of places $\fp$ of $K_i$ such that 
$a_i$ has a pole, or for some $0\leq j\leq i-1$ the element $a_{j,i-1}$
has a pole, or for some $0\leq j\leq i-1$ the point $c_j$ does not have portrait $(m_j,n_j)$ under the action of $f_i$ modulo $\fp$. This set $S_i$
is finite by Lemma \ref{lem:generic portrait}. By Theorem \ref{MN}, 
there exists a co-finite set $T^{(i)}\subset \bP^1(k)$ such that for every $c_i\in T^{(i)}$
there exist $a_{0,i},\ldots,a_{i,i}\in k_i$ satisfying the following. For $0\leq j\leq i$, the point $c_j$ has portrait $(m_j,n_j)$
under the action of 
$$f_{i+1}(z):=z^{d}+a_{d-2}z^{d-2}+\ldots+a_{i+1}z^{i+1}+a_{i,i}z^i+\ldots+a_{1,i}z+a_{0,i}.$$
Continuing the above process until $i=d-2$,  we finish the proof
  of Theorem~\ref{thm:dual arbitrary portraits}.
\end{proof}

We conclude this section by proving Corollary~\ref{cubic}.

\begin{proof}[Proof of Corollary~\ref{cubic}.]
  As in the proof of Theorem~\ref{thm:dual arbitrary portraits}, let $k_1=\overline{k(a)}$, 
  $K_1=k_1(b)$. 
    We apply Theorem~\ref{thm:projection} to obtain 
  a co-finite subset $U^{(1)}$ of $k$ such that for every $c_1\in U^{(1)}$,
  the following holds.
  For every $(m_1,n_1)\in \Z_{\geq 0}\times \N$,
  there exists $\bar{b}\in \overline{k(a)}$
  such that $c_1$ has portrait $(m_1,n_1)$ under:
  $$\varphi_{c_1,m_1,n_1}(z):=z^3+az+\bar{b}$$
  regarded as a polynomial in $K_2[z]$. Here $K_2:=k(a,\bar{b})$
  is a function field over $k_2:=k$.
  
  We claim that $W(\varphi_{c_1,m_1,n_1})$ is empty. By Lemma~\ref{preperiodicity lemma}, $c_1$
  is the only constant preperiodic point of $\varphi_{c_1,m_1,n_1}$. Hence for
  every portrait $(m_2,n_2)\neq (m_1,n_1)$, we have $(m_2,n_2)\notin W(\varphi_{c_1,m_1,n_1})$.
  It suffices to show $(m_1,n_1)\notin W(\varphi_{c_1,m_1,n_1})$ by proving that
  $\varphi_{c_1,m_1,n_1}$ has a point $\gamma\neq c_1$ of portrait $(m_1,n_1)$. The case $n_1>1$ is easy: if 
  $m_1=0$ we pick $\gamma_1=\varphi_{c_1,m_1,n_1}(c_1)$, while if $m_1>0$ we pick
  $\gamma_1\neq c_1$ such that $\varphi_{c_1,m_1,n_1}(\gamma_1)=\varphi_{c_1,m_1,n_1}(c_1)$.
  Note that this is possible since $\varphi_{c_1,m_1,n_1}$ has no totally ramified point other than infinity.
  We now consider the case $n_1=1$. Since the polynomial:
  $$\varphi_{c_1,m_1,n_1}(z)-z=z^3+(a-1)z+\bar{b}$$
  is not the cube of a linear polynomial in $K_2[z]$, we have that $\varphi_{c_1,m_1,n_1}$
  has at least two distinct points $\alpha_1,\alpha_2$ having portrait $(0,1)$. Using the fact that $\varphi$
  has no totally ramified point (other than infinity) and looking at appropriate backward orbits
  of $\alpha_1$ and $\alpha_2$, we get at least 2 points having portrait $(m_1,1)$. Hence $W(\varphi_{c_1,m_1,n_1})=\emptyset$.

  
  
  Let $S(c_1, m_1, n_1)$ be the set of places $\fp$ of $K_2$ such that $\varphi_{c_1, m_1, n_1}$ has bad reduction at $\fp$
  or $c_1$ does not have portrait $(m_1, n_1)$ modulo $\fp$.  Then
  Applying Theorem~\ref{thm:projection}, we see then
  that for each $(m_2, n_2)$ there is a co-finite set $\cU(c_1, m_1, n_1, m_2,
  n_2)$ such that for all $c_2 \in \cU(c_1, m_1, n_1, m_2,
  n_2)$, there is a polynomial $f(z) = z^3 + \tilde{a}z + \tilde{b} \in k[z]$ such that,
  for $i = 1, 2$, $c_i$ has portrait $(m_i, n_i)$ under $f$. Define: 
\[ U^{(2)}(c_1):=\bigcap_{\left( (m_1, n_1), (m_2, n_2) \right)} \cU(c_1, m_1, n_1, m_2,
  n_2) \] 
 which is a co-countable subset of $k$. From our construction, the sets $U^{(1)}$ and
 $U^{(2)}(c_1)$ for every $c_1\in U^{(1)}$ satisfy the
 assertion in Corollary~\ref{cubic}.

  For the second assertion in the corollary, we simply pick the elements $(c_1,c_2)\in k^2$ satisfying
  $c_1\in U^{(1)}$ and $c_2\in U^{(2)}(c_1)$. 
\end{proof}
 


\section{Future directions}
\label{sect:future}

One might ask if something much stronger than Theorem~\ref{arbitrary
  portraits} and Corollary \ref{cubic} is true.  It is possible that
if $d \geq 3$, then for any distinct points $c_1, \dots, c_{d-1} \in
\bC$ and any (not necessarily distinct) $(m_1, n_1), \dots, (m_{d-1},
n_{d-1}) \in \Z_{\geq 0}\times \N$, there is a polynomial $f(z) = z^d +
a_{d-2} z^{d-2} + \dots + a_0 \in \bC[z]$ such that $c_i$ has portrait
$(m_i, n_i)$ under $f$.  We know of no counterexamples.  Note,
however, that in the case $d=2$, there is no polynomial $f(z) = z^2 + a_0$
such that 0 has portrait $(1,m)$; this follows immediately from the fact that
the general quadratic $z^2 + t$ ramifies completely at 0.  Since for
$d \geq 3$, the general degree $d$ polynomial in normal form has no
totally ramified points (other than the point at infinity), this particular
example has no analog in degree greater than 2.  On the other hand, it
is also true that here is no polynomial $f(z) = z^2 + a_0$ such that
$-1/2$ has portrait $(0,2)$.  We do not know if there are other
interesting cases of ``missing portraits'', either in degree 2 or in
higher degree.  In the case of degree 2, the techniques of this paper
reduce the problem to a finite computation, but we do not know if it
is feasible to carry out the computation in a reasonable amount of
time.  

One might also ask how much of Theorem~\ref{arbitrary portraits},
Theorem \ref{thm:dual arbitrary portraits}, and Corollary \ref{cubic}
holds in the more general context of nonconstant points in $\overline
  {k(a_0, \dots, a_{d-2})}$.  One issue that arises here is
  the possibility that some $c_i$ is an iterate of another $c_j$ under
  the general degree $d$ polynomial, something that cannot happen when
  all of the $c_i$ are in $k$.

The multi-portrait problem studied here was inspired by work of
Douady, Hubbard, and Thurston \cite{D-H}, who treated the problem of portraits
of critical points of rational functions.  Their work yielded not only
existence results, but also information about finiteness (up to change of
variables) and transversality (of intersections of hypersurfaces
corresponding to portraits of marked critical points).  We hope to
treat constant point analogs of these results in future work.




\end{document}